\numberwithin{equation}{section}
\newtheorem{theorem}{Theorem}[section]
\newtheorem*{Thm}{Theorem}
\newtheorem{proposition}[theorem]{Proposition}
\newtheorem{corollary}[theorem]{Corollary}
\newtheorem{lemma}[theorem]{Lemma}
\theoremstyle{definition}
\newtheorem{definition}[theorem]{Definition}
\theoremstyle{remark}
\newtheorem*{remark}{Remark}
\newtheorem{example}{Example}
\begin{document}
\newcommand{\M}{\mathcal{M}}
\newcommand{\F}{\mathcal{F}}

\newcommand{\Teich}{\mathcal{T}_{g,N+1}^{(1)}}
\newcommand{\T}{\mathrm{T}}
\newcommand{\corr}{\bf}
\newcommand{\vac}{|0\rangle}
\newcommand{\Ga}{\Gamma}
\newcommand{\new}{\bf}
\newcommand{\define}{\def}
\newcommand{\redefine}{\def}
\newcommand{\Cal}[1]{\mathcal{#1}}
\renewcommand{\frak}[1]{\mathfrak{{#1}}}
\newcommand{\Hom}{\rm{Hom}\,}
\newcommand{\refE}[1]{(\ref{E:#1})}
\newcommand{\refCh}[1]{Chapter~\ref{Ch:#1}}
\newcommand{\refS}[1]{Section~\ref{S:#1}}
\newcommand{\refSS}[1]{Section~\ref{SS:#1}}
\newcommand{\refT}[1]{Theorem~\ref{T:#1}}
\newcommand{\refO}[1]{Observation~\ref{O:#1}}
\newcommand{\refP}[1]{Proposition~\ref{P:#1}}
\newcommand{\refD}[1]{Definition~\ref{D:#1}}
\newcommand{\refC}[1]{Corollary~\ref{C:#1}}
\newcommand{\refL}[1]{Lemma~\ref{L:#1}}
\newcommand{\R}{\ensuremath{\mathbb{R}}}
\newcommand{\C}{\ensuremath{\mathbb{C}}}
\newcommand{\N}{\ensuremath{\mathbb{N}}}
\newcommand{\Q}{\ensuremath{\mathbb{Q}}}
\renewcommand{\P}{\ensuremath{\mathcal{P}}}
\newcommand{\Z}{\ensuremath{\mathbb{Z}}}
\newcommand{\kv}{{k^{\vee}}}
\renewcommand{\l}{\lambda}
\newcommand{\gb}{\overline{\mathfrak{g}}}
\newcommand{\dt}{\tilde d}     
\newcommand{\hb}{\overline{\mathfrak{h}}}
\newcommand{\g}{\mathfrak{g}}
\newcommand{\h}{\mathfrak{h}}
\newcommand{\gh}{\widehat{\mathfrak{g}}}
\newcommand{\ghN}{\widehat{\mathfrak{g}_{(N)}}}
\newcommand{\gbN}{\overline{\mathfrak{g}_{(N)}}}
\newcommand{\tr}{\mathrm{tr}}
\newcommand{\gln}{\mathfrak{gl}(n)}
\newcommand{\son}{\mathfrak{so}(n)}
\newcommand{\spnn}{\mathfrak{sp}(2n)}
\newcommand{\sln}{\mathfrak{sl}}
\newcommand{\sn}{\mathfrak{s}}
\newcommand{\so}{\mathfrak{so}}
\newcommand{\spn}{\mathfrak{sp}}
\newcommand{\tsp}{\mathfrak{tsp}(2n)}
\newcommand{\gl}{\mathfrak{gl}}
\newcommand{\slnb}{{\overline{\mathfrak{sl}}}}
\newcommand{\snb}{{\overline{\mathfrak{s}}}}
\newcommand{\sob}{{\overline{\mathfrak{so}}}}
\newcommand{\spnb}{{\overline{\mathfrak{sp}}}}
\newcommand{\glb}{{\overline{\mathfrak{gl}}}}
\newcommand{\Hwft}{\mathcal{H}_{F,\tau}}
\newcommand{\Hwftm}{\mathcal{H}_{F,\tau}^{(m)}}

\newcommand{\car}{{\mathfrak{h}}}    
\newcommand{\bor}{{\mathfrak{b}}}    
\newcommand{\nil}{{\mathfrak{n}}}    
\newcommand{\vp}{{\varphi}}
\newcommand{\bh}{\widehat{\mathfrak{b}}}  
\newcommand{\bb}{\overline{\mathfrak{b}}}  
\newcommand{\Vh}{\widehat{\mathcal V}}
\newcommand{\KZ}{Kniz\-hnik-Zamo\-lod\-chi\-kov}
\newcommand{\TUY}{Tsuchia, Ueno  and Yamada}
\newcommand{\KN} {Kri\-che\-ver-Novi\-kov}
\newcommand{\pN}{\ensuremath{(P_1,P_2,\ldots,P_N)}}
\newcommand{\xN}{\ensuremath{(\xi_1,\xi_2,\ldots,\xi_N)}}
\newcommand{\lN}{\ensuremath{(\lambda_1,\lambda_2,\ldots,\lambda_N)}}
\newcommand{\iN}{\ensuremath{1,\ldots, N}}
\newcommand{\iNf}{\ensuremath{1,\ldots, N,\infty}}

\newcommand{\tb}{\tilde \beta}
\newcommand{\tk}{\tilde \varkappa}
\newcommand{\ka}{\kappa}
\renewcommand{\k}{\varkappa}
\newcommand{\ce}{{c}}

\newcommand{\Pif} {P_{\infty}}
\newcommand{\Pinf} {P_{\infty}}
\newcommand{\PN}{\ensuremath{\{P_1,P_2,\ldots,P_N\}}}
\newcommand{\PNi}{\ensuremath{\{P_1,P_2,\ldots,P_N,P_\infty\}}}
\newcommand{\Fln}[1][n]{F_{#1}^\lambda}
\newcommand{\tang}{\mathrm{T}}
\newcommand{\Kl}[1][\lambda]{\can^{#1}}
\newcommand{\A}{\mathcal{A}}
\newcommand{\U}{\mathcal{U}}
\newcommand{\V}{\mathcal{V}}
\newcommand{\W}{\mathcal{W}}
\renewcommand{\O}{\mathcal{O}}
\newcommand{\Ae}{\widehat{\mathcal{A}}}
\newcommand{\Ah}{\widehat{\mathcal{A}}}
\newcommand{\La}{\mathcal{L}}
\newcommand{\Le}{\widehat{\mathcal{L}}}
\newcommand{\Lh}{\widehat{\mathcal{L}}}
\newcommand{\eh}{\widehat{e}}
\newcommand{\Da}{\mathcal{D}}
\newcommand{\kndual}[2]{\langle #1,#2\rangle}
\newcommand{\cins}{\frac 1{2\pi\mathrm{i}}\int_{C_S}}
\newcommand{\cinsl}{\frac 1{24\pi\mathrm{i}}\int_{C_S}}
\newcommand{\cinc}[1]{\frac 1{2\pi\mathrm{i}}\int_{#1}}
\newcommand{\cintl}[1]{\frac 1{24\pi\mathrm{i}}\int_{#1 }}
\newcommand{\w}{\omega}
\newcommand{\ord}{\operatorname{ord}}
\newcommand{\res}{\operatorname{res}}
\newcommand{\nord}[1]{:\mkern-5mu{#1}\mkern-5mu:}
\newcommand{\codim}{\operatorname{codim}}
\newcommand{\ad}{\operatorname{ad}}
\newcommand{\Ad}{\operatorname{Ad}}
\newcommand{\supp}{\operatorname{support}}

\newcommand{\Fn}[1][\lambda]{\mathcal{F}^{#1}}
\newcommand{\Fl}[1][\lambda]{\mathcal{F}^{#1}}
\renewcommand{\Re}{\mathrm{Re}}

\newcommand{\ha}{H^\alpha}

\define\ldot{\hskip 1pt.\hskip 1pt}
\define\ifft{\qquad\text{if and only if}\qquad}
\define\a{\alpha}
\redefine\d{\delta}
\define\w{\omega}
\define\ep{\epsilon}
\redefine\b{\beta} \redefine\t{\tau} \redefine\i{{\,\mathrm{i}}\,}
\define\ga{\gamma}
\define\cint #1{\frac 1{2\pi\i}\int_{C_{#1}}}
\define\cintta{\frac 1{2\pi\i}\int_{C_{\tau}}}
\define\cintt{\frac 1{2\pi\i}\oint_{C}}
\define\cinttp{\frac 1{2\pi\i}\int_{C_{\tau'}}}
\define\cinto{\frac 1{2\pi\i}\int_{C_{0}}}
\define\cinttt{\frac 1{24\pi\i}\int_C}
\define\cintd{\frac 1{(2\pi \i)^2}\iint\limits_{C_{\tau}\,C_{\tau'}}}
\define\dintd{\frac 1{(2\pi \i)^2}\iint\limits_{C\,C'}}
\define\cintdr{\frac 1{(2\pi \i)^3}\int_{C_{\tau}}\int_{C_{\tau'}}
\int_{C_{\tau''}}}
\define\im{\operatorname{Im}}
\define\re{\operatorname{Re}}
\define\res{\operatorname{res}}
\redefine\deg{\operatornamewithlimits{deg}}
\define\ord{\operatorname{ord}}
\define\rank{\operatorname{rank}}
\define\fpz{\frac {d }{dz}}
\define\dzl{\,{dz}^\l}
\define\pfz#1{\frac {d#1}{dz}}

\define\K{\Cal K}
\define\U{\Cal U}
\redefine\O{\Cal O}
\define\He{\text{\rm H}^1}
\redefine\H{{\mathrm{H}}}
\define\Ho{\text{\rm H}^0}
\define\A{\Cal A}
\define\Do{\Cal D^{1}}
\define\Dh{\widehat{\mathcal{D}}^{1}}
\redefine\L{\Cal L}
\newcommand{\ND}{\ensuremath{\mathcal{N}^D}}
\redefine\D{\Cal D^{1}}
\define\KN {Kri\-che\-ver-Novi\-kov}
\define\Pif {{P_{\infty}}}
\define\Uif {{U_{\infty}}}
\define\Uifs {{U_{\infty}^*}}
\define\KM {Kac-Moody}
\define\Fln{\Cal F^\lambda_n}
\define\gb{\overline{\mathfrak{ g}}}
\define\G{\overline{\mathfrak{ g}}}
\define\Gb{\overline{\mathfrak{ g}}}
\redefine\g{\mathfrak{ g}}
\define\Gh{\widehat{\mathfrak{ g}}}
\define\gh{\widehat{\mathfrak{ g}}}
\define\Ah{\widehat{\Cal A}}
\define\Lh{\widehat{\Cal L}}
\define\Ugh{\Cal U(\Gh)}
\define\Xh{\hat X}
\define\Tld{...}
\define\iN{i=1,\ldots,N}
\define\iNi{i=1,\ldots,N,\infty}
\define\pN{p=1,\ldots,N}
\define\pNi{p=1,\ldots,N,\infty}
\define\de{\delta}

\define\kndual#1#2{\langle #1,#2\rangle}
\define \nord #1{:\mkern-5mu{#1}\mkern-5mu:}
\newcommand{\MgN}{\mathcal{M}_{g,N}} 
\newcommand{\MgNeki}{\mathcal{M}_{g,N+1}^{(k,\infty)}} 
\newcommand{\MgNeei}{\mathcal{M}_{g,N+1}^{(1,\infty)}} 
\newcommand{\MgNekp}{\mathcal{M}_{g,N+1}^{(k,p)}} 
\newcommand{\MgNkp}{\mathcal{M}_{g,N}^{(k,p)}} 
\newcommand{\MgNk}{\mathcal{M}_{g,N}^{(k)}} 
\newcommand{\MgNekpp}{\mathcal{M}_{g,N+1}^{(k,p')}} 
\newcommand{\MgNekkpp}{\mathcal{M}_{g,N+1}^{(k',p')}} 
\newcommand{\MgNezp}{\mathcal{M}_{g,N+1}^{(0,p)}} 
\newcommand{\MgNeep}{\mathcal{M}_{g,N+1}^{(1,p)}} 
\newcommand{\MgNeee}{\mathcal{M}_{g,N+1}^{(1,1)}} 
\newcommand{\MgNeez}{\mathcal{M}_{g,N+1}^{(1,0)}} 
\newcommand{\MgNezz}{\mathcal{M}_{g,N+1}^{(0,0)}} 
\newcommand{\MgNi}{\mathcal{M}_{g,N}^{\infty}} 
\newcommand{\MgNe}{\mathcal{M}_{g,N+1}} 
\newcommand{\MgNep}{\mathcal{M}_{g,N+1}^{(1)}} 
\newcommand{\MgNp}{\mathcal{M}_{g,N}^{(1)}} 
\newcommand{\Mgep}{\mathcal{M}_{g,1}^{(p)}} 
\newcommand{\MegN}{\mathcal{M}_{g,N+1}^{(1)}} 

\define \sinf{{\widehat{\sigma}}_\infty}
\define\Wt{\widetilde{W}}
\define\St{\widetilde{S}}
\newcommand{\SigmaT}{\widetilde{\Sigma}}
\newcommand{\hT}{\widetilde{\frak h}}
\define\Wn{W^{(1)}}
\define\Wtn{\widetilde{W}^{(1)}}
\define\btn{\tilde b^{(1)}}
\define\bt{\tilde b}
\define\bn{b^{(1)}}
\define \ainf{{\frak a}_\infty} 

%
\define\eps{\varepsilon}    
\newcommand{\e}{\varepsilon}
\define\doint{({\frac 1{2\pi\i}})^2\oint\limits _{C_0}
       \oint\limits _{C_0}}                            
\define\noint{ {\frac 1{2\pi\i}} \oint}   
\define \fh{{\frak h}}     
\define \fg{{\frak g}}     
\define \GKN{{\Cal G}}   
\define \gaff{{\hat\frak g}}   
\define\V{\Cal V}
\define \ms{{\Cal M}_{g,N}} 
\define \mse{{\Cal M}_{g,N+1}} 
\define \tOmega{\Tilde\Omega}
\define \tw{\Tilde\omega}
\define \hw{\hat\omega}
\define \s{\sigma}
\define \car{{\frak h}}    
\define \bor{{\frak b}}    
\define \nil{{\frak n}}    
\define \vp{{\varphi}}
\define\bh{\widehat{\frak b}}  
\define\bb{\overline{\frak b}}  
\define\KZ{Knizhnik-Zamolodchikov}
\define\ai{{\alpha(i)}}
\define\ak{{\alpha(k)}}
\define\aj{{\alpha(j)}}
\newcommand{\calF}{{\mathcal F}}
\newcommand{\ferm}{{\mathcal F}^{\infty /2}}
\newcommand{\Aut}{\operatorname{Aut}}
\newcommand{\End}{\operatorname{End}}
\newcommand{\laxgl}{\overline{\mathfrak{gl}}}
\newcommand{\laxsl}{\overline{\mathfrak{sl}}}
\newcommand{\laxso}{\overline{\mathfrak{so}}}
\newcommand{\laxsp}{\overline{\mathfrak{sp}}}
\newcommand{\laxs}{\overline{\mathfrak{s}}}
\newcommand{\laxg}{\overline{\frak g}}
\newcommand{\bgl}{\laxgl(n)}
\newcommand{\tX}{\widetilde{X}}
\newcommand{\tY}{\widetilde{Y}}
\newcommand{\tZ}{\widetilde{Z}}


\title[Matrix divisors]{Matrix divisors on Riemann surfaces and Lax operator algebras}
\author[O.K. Sheinman]{O.K. Sheinman}
\thanks{Partial  support by the
Internal Research Project  GEOMQ11,  University of Luxembourg,
and
by the OPEN scheme of the Fonds National de la Recherche
(FNR), Luxembourg,  project QUANTMOD O13/570706
is gratefully acknowledged.}

\maketitle


\tableofcontents
\section{Introduction}
Matrix divisors are introduced in the work by A.Weil \cite{Weil} which is considered as a starting point of the theory of holomorphic vector bundles on Riemann surfaces. The classification of the holomorphic vector bundles on Riemann surfaces by A.N.Tyurin \cite{Tyur64,Tyur65,Tyur66} based on matrix divisors, the well-known Narasimhan--Seshadri description of stable vector bundles \cite{N-S}, and subsequent description of the moduli space of vector bundles with the parabolic structure \cite{Seshadri,MehtaS} originate in \cite{Weil}. In the theory of holomorphic vector bundles the matrix divisors play the role similar to the role of usual divisors in the theory of line bundles.

The matrix divisor approach to classification of holomorphic vector bundles provides invariants not only of stable bundles but also of families of smaller dimensions. Moreover, it provides explicit coordinates, invented in \cite{Tyur65}, in an open subset of the moduli space of stable vector bundles. In \cite{rKNU}, these coordinates were given the name of \emph{Tyurin parameters} and applied in integration of soliton equations.

To be more specific, assume that a holomorphic rank $n$ vector bundle has the $n$-dimensional space of holomorphic sections. Then any base of the space of the holomorphic sections is called \emph{framing}, and the bundle with a given framing is called a \emph{framed bundle}. The classification of the framed holomorphic vector bundles is one of the main results of \cite{Tyur64,Tyur65,Tyur66}. In particular, it follows from \cite{Tyur65,Tyur66} that the moduli space of stable framed rank $n$ holomorphic vector bundles of degree $0$ is a quasiprojective variety of dimension $(n^2-1)(g-1)$ where $g\ge 2$ is the genus of the Riemann surface, and if in the same set-up we consider the bundles of degree $ng$ then the dimension of the corresponding quasiprojective variety is equal to $n^2(g-1)+1$. It has been also shown by Tyurin that the bundles which do not possess any natural framing depend on a smaller number of parameters.

In the present paper, we address the problem of classifying the matrix divisors. It is a straight forward generalization of the problem of classifying the framed vector bundles. Indeed, let $\psi_1^U,\ldots, \psi_n^U$ be the elements of a framing represented in local coordinates (i.e. the local meromorphic vector-functions defined at the local coordinate set $U$). Then the collection of matrices $\Psi^U$ formed by them at every $U$ form a matrix divisor.

We would like to gain attention to one more relationship between matrix divisors and the theory of integrable systems, namely to the relationship with Lax operator algebras. Those came to existence due to the theory by Krichever \cite{Klax} of integrable systems with the spectral parameter on a Riemann surface. Originally, this theory has been motivated in part by the Tyurin parametrization of framed vector bundles. In \cite{Sh_TrGr,Sh_UMN_2015} it has been developed in the different,  and more general set-up related to $\Z$-gradings of the semisimple Lie algebras. The main purpose of the present work is to develop the corresponding set-up in the theory of matrix divisors. The result we obtain on this way can be briefly formulated as follows.
\begin{Thm}
The moduli space $\M$ of matrix divisors with certain discrete invariants and fixed support is a homogeneous space. For its tangent space at the unit we have
\begin{equation}\label{E:quotq}
 T_e\M\cong \M^\L/\L
\end{equation}
where $\L$ is the Lax operator algebra essentially defined by the same invariants, $\M^\L$  is the corresponding space of $M$-operators.
\end{Thm}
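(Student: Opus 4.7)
The plan is to realise $\M$ as a quotient of two infinite-dimensional groups acting on matrix divisors, and then read off the tangent space at the identity from the corresponding short exact sequence of Lie algebras. First I would fix the combinatorial data once and for all: the support $D$ of the divisor (including the Tyurin/special points), the rank, the degree, and whatever other discrete invariants are needed to describe a stratum inside the full classification reviewed in the Introduction. A matrix divisor is then a collection of local $GL_n$-valued meromorphic frames $\{\Psi^U\}$ with holomorphic transitions on overlaps; two such collections should be declared equivalent when they differ by left multiplication by a global meromorphic matrix function respecting the fixed invariants. This is what realises $\M$ as a coset space $\mathcal G/H$.

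The next step is to identify the Lie algebras of $\mathcal G$ and $H$. The ambient group $\mathcal G$ should consist of transformations $\Psi^U\mapsto g\Psi^U$ where $g$ is a globally meromorphic matrix function whose poles and leading-term behaviour are controlled precisely by the fixed invariants at the points of $D$. On the infinitesimal level, this is exactly the space $\M^\L$ of $M$-operators in the Krichever--Sheinman sense, since $M$-operators are defined by the same local pole/direction conditions. The stabiliser $H$ of the trivial (``unit'') matrix divisor is the subgroup of those $g$ that preserve the local frames up to a holomorphic change; once one linearises the pole and direction constraints at each Tyurin point, the Lie algebra of $H$ is precisely the Lax operator algebra $\L$. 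Homogeneity of $\M$ then amounts to the statement that any two matrix divisors with the same discrete invariants and support are related by such a global $g$, which is essentially the framed classification of Tyurin recalled above.

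Assuming the two Lie-algebra identifications, the equation \refE{quotq} follows from the standard fact that the tangent space of a homogeneous space $\mathcal G/H$ at the neutral coset is $\mathrm{Lie}(\mathcal G)/\mathrm{Lie}(H)$, applied to the principal $H$-bundle $\mathcal G\to \mathcal G/H$. In particular no new analytic input is needed beyond the identifications made in the previous paragraph.

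The main obstacle, and the place where the theory of Lax operator algebras must be used most carefully, is the identification $\mathrm{Lie}(H)\cong \L$. Concretely, one must linearise the action $\Psi^U\mapsto g\Psi^U$ at $g=I$ in a neighbourhood of each Tyurin point and verify that the conditions for $g$ to preserve a local frame up to a holomorphic factor coincide exactly with the defining relations of $\L$, namely the prescribed pole orders together with the ``half-Cartan''/rank-one constraints on the leading Laurent coefficients governed by the Tyurin vectors $\alpha_s$. A secondary technical point is to check that $\mathcal G$ acts within a fixed stratum of discrete invariants and is transitive on that stratum; this requires translating the Riemann--Roch counts of \cite{Tyur65,Tyur66} into statements about surjectivity of the orbit map, but does not introduce any new obstruction once the pole conditions are aligned correctly with those defining $\M^\L$ and $\L$.
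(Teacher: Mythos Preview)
Your argument hinges on exhibiting a group $\mathcal G$ with $\mathrm{Lie}(\mathcal G)=\M^{\L}$, and this is where it fails: the space $\M^{\L}$ of $M$-operators is \emph{not} a Lie algebra. An $M$-operator is required to have its negative-degree Laurent coefficients at each $\gamma\in\Gamma$ in the filtration pieces $\tilde{\g}_i^\gamma$ (plus the extra $\nu_\gamma h_\gamma/z$ term), while its nonnegative-degree coefficients are unconstrained in $\g$. Bracketing an unconstrained $M_0\in\g$ against an $M'_{-1}\in\tilde{\g}_{-1}^\gamma$ does not land back in $\C h_\gamma\oplus\tilde{\g}_{-1}^\gamma$, so $[\M^{\L},\M^{\L}]\not\subset\M^{\L}$. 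Hence there is no group $\mathcal G$ of global gauge transformations whose Lie algebra is $\M^{\L}$, and the clean ``$T_e(\mathcal G/H)=\mathrm{Lie}(\mathcal G)/\mathrm{Lie}(H)$'' step is unavailable.

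The paper's route is structurally different. The homogeneous-space description of $\M$ is purely \emph{local}: using the Cartan decomposition one reduces each germ to the form $z^{h_\gamma}k_\gamma(z)$, and the moduli space is the product over $\gamma\in\Gamma$ of Taylor-series groups $K=G(\C(z))$ modulo the product of stabilisers $K_0$ of the associated flag configuration (\refP{modsp}). Its tangent space at the identity is computed explicitly in root-space terms (\refT{mspace}). The identification with $\M^{\L}/\L$ is then carried out \emph{a posteriori} by the localisation map sending a global $M$- or $L$-operator to its jets at $\Gamma$: one checks injectivity from a pole-counting argument under the hypothesis $\deg D<|\Gamma|$, observes that the negative parts of $M$- and $L$-operators coincide and hence cancel in the quotient, and matches the surviving Taylor parts $\g/\tilde{\g}_i^\gamma$ with the description from \refT{mspace}. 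So the quotient $\M^{\L}/\L$ is a quotient of vector spaces, not of Lie algebras, and its agreement with $T_e\M$ is a computation rather than a consequence of a group-theoretic short exact sequence.
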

\noindent
This result goes back to \cite{Klax}. We refer to \refS{globa}, in particular to \refT{quot}, for the details, notation, and a more precise statement.

We were not able to find out any reference for the matrix divisors of $G$-bundles for $G$ a complex semisimple group. It is one of the purposes of the present work, closely related to the main purpose, to propose a treatment of such matrix divisors. To do that, we use the Chevalley groups over the field (resp. ring) of Laurent (resp. Tailor) series. It is a very adequate set-up for matrix divisors by our opinion, because a Chevalley group is defined by a (complex, semisimple) Lie algebra and its faithful representation given by a highest weight. Such data contain information both on the group structure and on the fibre of the bundle. Moreover, the Cartan decomposition of Chevalley groups in its general form  provides a convenient description of the canonical form of a matrix divisor (Theorems \ref{T:Cheval},\ref{T:Cartan} below). The Cartan decomposition, in particular, states that for an arbitrary Chevalley group $G$ over the field of Laurent series the following holds: $G=KA^+K$ where $K$ is the same group considered over the ring of Tailor series, and $A^+$ is a chamber in the maximal torus. In \cite{Tyur65} the same role is played by Lemma 1.2.1. However, the last claims a stronger statement, namely it specifies the form of the $K$-component of the decomposition in the following quite beautiful way:
let $k$ be the $K$-component at a certain point of the divisor support, $diag(z^{d_1},\ldots,z^{d_n})\in A^+$ be the toric component, $d_1\le \ldots\le d_n$, $E_{ij}$ are the matrix units, then
\[
          k=E+\sum_{i<j}a_{ij}(z)E_{ij},\quad a_{ij}(z)\in \C(z)/z^{d_j-d_i}\C(z)
\]
where $\C(z)$ is the ring of Tailor series. Since we are not able to follow all the arguments by A.N.Tyurin in course of deriving that expression, we reinterpret it,  generalize it to the case of an arbitrary reduced root system $R$, and thus obtain the following description of the tangent space to the moduli space of matrix divisors (see \refT{mspace} below for the more precise statement).
\begin{Thm}
The tangent space to $\M$ at the unit consists of elements of the form
\[
    \bigoplus_{\ga\in\Gamma}\sum_{\a\in R^+} a_\a^\ga(z)x_\a,\quad a_\a^\ga(z)\in \C(z)/z^{\a(h_\ga)}\C(z)
\]
\end{Thm}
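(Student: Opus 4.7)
The plan is to reduce to a local computation at each point $\ga\in\Ga$ of the support and then assemble by direct summation. The unit in $\M$ is the matrix divisor whose local representative at each $\ga$ is the toric element $z^{h_\ga}$. Having fixed the discrete invariants $h_\ga$, the Cartan decomposition of \refT{Cartan} writes any nearby matrix divisor locally at $\ga$ in the form $k\cdot z^{h_\ga}$ with $k\in K_\ga:=G(\O_\ga)$, unique modulo right multiplication by the stabilizer $K_\ga\cap z^{h_\ga}K_\ga z^{-h_\ga}$. Consequently, the local moduli space at $\ga$ is the homogeneous space $K_\ga/(K_\ga\cap z^{h_\ga}K_\ga z^{-h_\ga})$, and its tangent space at the unit coset equals
\[
\g(\O_\ga)\big/\bigl(\g(\O_\ga)\cap\Ad(z^{h_\ga})\g(\O_\ga)\bigr).
\]

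Next I would decompose this quotient via the root space decomposition $\g=\car\oplus\bigoplus_{\a\in R}\g_\a$, extended coefficientwise to $\g(\O_\ga)$. The key observation is that $\Ad(z^{h_\ga})$ preserves this decomposition, acts trivially on the Cartan part, and scales each $\g_\a$ by $z^{\a(h_\ga)}$. For $\a\in R^+$ one has $\a(h_\ga)\ge 0$ since $h_\ga$ lies in the closed positive chamber, so the contribution at $\a$ is $\g_\a(\O_\ga)/z^{\a(h_\ga)}\g_\a(\O_\ga)\cong\bigl(\C(z)/z^{\a(h_\ga)}\C(z)\bigr)x_\a$. For $\a$ a negative root one has $\a(h_\ga)\le 0$, so $\Ad(z^{h_\ga})\g_\a(\O_\ga)\supseteq\g_\a(\O_\ga)$, the intersection is all of $\g_\a(\O_\ga)$, and the contribution vanishes; the Cartan piece likewise contributes zero. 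Summing over the finite support $\Ga$ yields the stated formula.

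The main technical obstacle is the identification of the local moduli at $\ga$ with the homogeneous space $K_\ga/(K_\ga\cap z^{h_\ga}K_\ga z^{-h_\ga})$. This rests on a canonical-form statement for matrix divisors with fixed discrete invariants, refining the existence part of \refT{Cartan} by a uniqueness claim for the $K$-component modulo the stabilizer of the chosen toric element. Once in place, the rest is essentially Lie-theoretic bookkeeping. The conceptual content of the generalization from Tyurin's $\gln$-version is that his shift $d_j-d_i$ is exactly $\a(h_\ga)$ for the positive root $\a$ corresponding to the matrix unit $E_{ij}$; replacing matrix units by root vectors $x_\a$ passes directly from $\gln$ to any reduced root system $R$.
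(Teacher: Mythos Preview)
Your proposal is correct and follows essentially the same approach as the paper. The only cosmetic difference is that the paper identifies the stabilizer via \refP{modsp} and the filtration subspaces $\tilde\g_i^\ga$, whereas you phrase it directly as $K\cap z^{h_\ga}Kz^{-h_\ga}$ via the $\Ad(z^{h_\ga})$-action; since the paper's $\g_\ga$ coincides with $\Ad(z^{h_\ga})\g(\O_\ga)$, the two descriptions agree, and the subsequent root-by-root computation of the quotient is identical in both.
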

\noindent
where $\Gamma$ is the divisor support, $h_\ga$ comes from the maximal torus component of the Cartan decomposition at $\ga$.
Finally it turns out to be an important argument for establishing the above relationship (given by \refE{quotq}) between matrix divisors and Lax operator algebras.

In the present paper we assume $G$ to be semi-simple which corresponds to the case of topologically trivial holomorphic vector bundles. To include the topologically non-trivial bundles we would need to consider the conformal extensions of semi-simple groups \cite{LOSZ} instead. Let $G$ be a complex semi-simple group with the finite center $\mathcal Z$ equal to a direct sum of $r$ cyclic components. By \emph{conformal extension} of $G$ we call $G_c=G\times_{\mathcal Z}(\C^*)^r$. For example, $GL(n,\C)$ is a conformal extension of $SL(n,\C)$. We do not focus on this easy modification here.

The plan of the present paper is as follows. In \refS{mdiv} we give the preliminaries on matrix divisors, our treatment of this notion related to Chevalley groups, and a description of the space of sections of a matrix divisor in terms of certain flag configurations. In \refS{canon} we define the moduli space of matrix divisors as a certain coset, and prove \refT{mspace} giving a description of its tangent space at the unit in terms of the root system of the group, and the weight lattice of the underlying module. In \refS{globa} we give preliminaries on Lax operator algebras (see \cite{Sh_UMN_2015} for the details) and then complete the interpretation of the moduli space from the point of view of integrable systems identifying the tangent space at the unit with the coset of the space of $M$-operators by the space of $L$-operators in spirit of \cite{Klax}, relying on the results of \cite{Sh_UMN_2015}.

I would like to express my gratitude to I.M.Krichever whom I am indebted with my interest to the subject, and who is a pioneer of many ideas relating integrable systems and holomorphic vector bundles on Riemann surfaces. I am grateful to M.Schlichenmaier with whom we started to discuss the subject many years ago, and to E.M.Chirka for his help in complex analysis. I would like to acknowledge a special role of discussions with E.B.Vinberg on Lax operator algebras and algebraic groups.
\section{Matrix divisors and flag configurations}\label{S:mdiv}
Let $G$ denote a Chevalley group given by a semi-simple complex Lie algebra  $\g$, a faithful $\g$-module $V$ with dominant highest weight, and a field $k$. We recall that $G$ is the group of automorphisms of the $k$-space $V^k=V\otimes_\Q k$ generated by the 1-parameter subgroups of automorphisms of the form $\exp tX_\a = \sum\limits_{n=0}^\infty t^nX_\a^n/n!$ (the sums being actually finite on $V^k$) where $X_\a$ is the root vector of the root $\a$.

Let $\Sigma$ be a Riemann surface.

The following system of definitions reproduces the corresponding definitions in \cite{Tyur65}.
\begin{definition}
Assume each point of $\Sigma$ to be assigned with a germ of meromorphic $G$-valued functions holomorphic except at a finite set $\Gamma\subset\Sigma$. Such a correspondence is called \emph{distribution} with the support~$\Gamma$.
\end{definition}
\begin{definition}
Two distributions $A_x$ and $B_x$, $x\in\Sigma$ are \emph{equivalent} if there is a third distribution $C_x$ holomorphic for every $x\in\Sigma$ and such that $C_xA_x=B_x$. Any class of equivalent distributions is called \emph{matrix divisor}.
\end{definition}
That $C_x$ is holomorphic and $G$-valued implies in particular that it is holomorphically invertible.

We delay the discussion of equivalent matrix divisors until \refS{canon} (\refD{equid} and below).
\begin{definition}\label{D:sect}
Given a matrix divisor $\Psi$, by its  \emph{local section} (or just \emph{section}) we mean a meromorphic $V$-valued function $f$ on an open subset $U\subset\Sigma$ such that $f$ is holomorphic on $U\backslash\Gamma$ and $\Psi_\ga f$ is holomorphic in the neighborhood of any $\ga\in \Gamma\cap U$.
\end{definition}
We denote the sheaf of sections by $\Gamma_V(\Psi)$. It has a simple description in terms of flag configurations related to the divisor.

Given a matrix divisor $\Psi$ we assign a flag in $V$ to every point in its support. Thus the divisor turns out to be assigned with the system of flags which we call a \emph{flag configuration} (this term assumes $\Gamma$ to be fixed).

Let $f$ be a meromorphic $V$-valued function on $U\subset\Sigma$. In order $f$ be a section it is required that
\begin{equation}\label{E:sect}
         s=\Psi f
\end{equation}
is holomorphic at every $\ga\in\Gamma\cap U$ where $\Gamma=\operatorname{ support}\Psi$. Assume $\Psi$ to have an expansion of the form
\[
    \Psi=\sum_{i=-m}^\infty \Psi_iz^i
\]
at $\ga$, and $f$ to have an expansion of the form
\[
        f=\sum_{i=-k}^\infty f_iz^i
\]
there. We take $-k=\ord_\ga\Psi_\ga^{-1}$ which follows from \refE{sect}. We then have the following system of $m+k$ linear equations
\begin{equation}\label{E:syst}
\begin{aligned}
  & \Psi_{-m}f_{-k}=0,\quad  \Psi_{-m}f_{-k+1}+\Psi_{-m+1}f_{-k}=0,\ \ldots,
  \\
  & \Psi_{-m} f_{m-1}+\Psi_{-m+1} f_{m-2}+\ldots+ \Psi_{k-1} f_{-k}=0\ .
\end{aligned}
\end{equation}
expressing the fact that the terms containing $z^{-m-k},\ldots,z^{-1}$ of $\Psi f$ vanish, i.e. $s$ in \refE{sect} is holomorphic. This system of equations is homogeneous, hence all the components of its solutions constitute linear spaces. Let $F_i$ be the subspace in $V$ constituted by $f_i$'s for all solutions $f_{-k},f_{-k+1},\ldots,f_{m-1}$ to \refE{syst}.

The following lemma claims that the subspaces $F_i$, $i=-k,\ldots,m-1$ constitute a flag; we were not able to find any reference for it. Similar arguments are used for the flag interpretation of opers \cite{Feig}.
\begin{lemma}\label{L:flag}
     $F_{-k}\subseteq F_{-k+1}\subseteq\ldots\subseteq F_{m-1}\subseteq V$.
\end{lemma}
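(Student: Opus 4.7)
The plan is to establish each successive inclusion $F_i \subseteq F_{i+1}$ via a shift argument reflecting the elementary fact that if $f$ is a meromorphic $V$-valued function with $\Psi f$ holomorphic, then $zf$ enjoys the same property. First I would observe that the system \refE{syst} is linear and homogeneous in the tuple $(f_{-k},\ldots,f_{m-1})$; hence each $F_i$ is a genuine linear subspace of $V$, being the image of the solution space under the linear projection onto the $i$-th component.

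To prove $F_i \subseteq F_{i+1}$ for $-k \le i \le m-2$, I would fix $v \in F_i$ together with a solution $(f_{-k}, \ldots, f_{m-1})$ of \refE{syst} with $f_i = v$, and introduce the shifted tuple
$(g_{-k}, g_{-k+1}, \ldots, g_{m-1}) := (0, f_{-k}, f_{-k+1}, \ldots, f_{m-2})$,
which represents multiplication of the underlying Laurent series $\sum_{j \ge -k} f_j z^j$ by $z$ (then truncated at $z^{m-1}$). The $\ell$-th equation of \refE{syst} for $g$ reads $\sum_{s=0}^{\ell} \Psi_{-m+s}\, g_{-k+\ell-s} = 0$. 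For $\ell = 0$ this is $\Psi_{-m}\, g_{-k} = 0$ trivially; for $\ell \ge 1$ the term with $s = \ell$ vanishes since $g_{-k} = 0$, and what remains equals $\sum_{s=0}^{\ell-1}\Psi_{-m+s}\, f_{-k+(\ell-1)-s}$, i.e.\ precisely the $(\ell-1)$-th equation of \refE{syst} for $f$, which holds by assumption. Hence $g$ is a bona fide solution, and since $g_{i+1} = f_i = v$, we conclude $v \in F_{i+1}$.

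The final inclusion $F_{m-1}\subseteq V$ is tautological by construction. I do not anticipate a serious obstacle: the content of the lemma is really the observation that the truncated solution spaces of \refE{syst} are a filtered $\C[[z]]$-module, with the inclusion maps induced by the shift $f\mapsto zf$. The only nuisance is the index bookkeeping, which the argument above makes transparent.
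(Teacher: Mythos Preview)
Your proof is correct and is essentially the same argument as the paper's: both rest on the observation that if $(f_{-k},\ldots,f_{m-1})$ solves \refE{syst} then the shifted tuple $(0,f_{-k},\ldots,f_{m-2})$ does as well, which is exactly the paper's remark that setting $f_{-k}=0$ makes the $(i+1)$th equation reduce to the $i$th. Your version is more carefully indexed and makes the $f\mapsto zf$ interpretation explicit, but the content is identical.
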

\begin{proof} Solutions to \refE{syst} can be found out by an inductive procedure. The first equation is independent and homogeneous. The space of its solutions is exactly what we have denoted by $F_{-k}$. Solutions to the second equation can be found out by plugging an arbitrary $f_{-k}\in F_{-k}$, and resolving the obtained system of equations. In particular, we can plug $f_{-k}=0$. Then the second equation coincides with the first one, hence $F_{-k}\subseteq F_{-k+1}$.

The $i$th equation of \refE{syst} (at $z^{-m-k+i}$) is as follows:
\[
    \Psi_{-m} f_{-k+i}+\Psi_{-m+1} f_{-k+i-1}+\ldots+ \Psi_{-m+i} f_{-k}=0.
\]
The next equation has the form
\[
    \Psi_{-m} f_{-k+i+1}+\Psi_{-m+1} f_{-k+i}+\ldots+ \Psi_{-m+i} f_{-k+1}+\Psi_{-m+i+1} f_{-k}=0.
\]
If $f_{-k}=0$, the $(i+1)$th equation degenerates to the $i$th one. Hence every solution of the $i$th equation is a particular solution of the $(i+1)$th equation, i.e. $F_{-m-k+i}\subseteq F_{-m-k+i+1}$.
\end{proof}
The description of the sheaf $\Gamma_V(\Psi)$ is now as follows.
\begin{lemma}\label{L:sect}
$\Gamma_V(\Psi)$ is the sheaf of local meromorphic $V$-valued functions on $\Sigma$ satisfying the following requirement for every $\ga\in\Gamma$. Let $f$ be such a function, and $f(z)=\sum f_i^\ga z^i$ be its Laurent expansion at a $\ga\in\Gamma$. Then it is required that $f_i^\ga\in F_i^\ga$ where $F_\ga:\,\{ 0\}\subseteq\ldots\subseteq F_i^\ga\subseteq\ldots\subseteq V$ is the flag corresponding to $\ga$.
\end{lemma}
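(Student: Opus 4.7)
The plan is to translate the section condition of \refD{sect} into a linear-algebraic condition on Laurent coefficients and then recognize the flag membership directly from the definition of $F_i^\ga$.

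First I would unpack the definition: $f$ is a local section of $\Gamma_V(\Psi)$ on $U$ precisely when $\Psi_\ga f$ is holomorphic at every $\ga\in\Gamma\cap U$. Fix such a $\ga$ and a local coordinate $z$ centered there. Writing the Laurent expansions $\Psi_\ga=\sum_{i\ge-m}\Psi_iz^i$ and $f=\sum_{i\ge-k}f_i^\ga z^i$, with $-k=\ord_\ga\Psi_\ga^{-1}$ bounding the pole order of $f$, the condition that $\Psi_\ga f$ carry no negative Laurent terms at $\ga$ amounts, after matching the powers $z^{-m-k},\ldots,z^{-1}$, to precisely the system \refE{syst}. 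Thus being a section at $\ga$ is equivalent to $(f_{-k}^\ga,\ldots,f_{m-1}^\ga)$ solving \refE{syst}.

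Once this identification is made, the forward direction is immediate from the very definition of $F_i^\ga$ as the set of $i$-th components of solutions of \refE{syst}: each $f_i^\ga$ of a section lies in $F_i^\ga$ for $-k\le i\le m-1$. Coefficients $f_i^\ga$ with $i\ge m$ contribute only to nonnegative powers of $z$ in $\Psi_\ga f$ and are unconstrained, so one adopts the convention $F_i^\ga=V$; for $i<-k$ the coefficient vanishes by the choice of the pole bound. That these subspaces form an ascending flag is exactly \refL{flag}.

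For the converse I would use the inductive construction from the proof of \refL{flag}. Given a meromorphic $f$ whose Laurent coefficients satisfy $f_i^\ga\in F_i^\ga$ at each $\ga$, the equations of \refE{syst} are treated in order: the first involves only $f_{-k}^\ga$ and holds by membership in $F_{-k}^\ga=\ker\Psi_{-m}$; each subsequent equation is a linear constraint on $f_{-k+\ell-1}^\ga$ given the previous coefficients, and its compatibility with the flag membership is the inductive step implicit in \refL{flag}. Iterating gives holomorphicity of $\Psi_\ga f$ at every $\ga$, so $f$ is a section. The main obstacle, as I see it, is exactly this converse: the equations of \refE{syst} a priori couple the Laurent coefficients across different indices, so one must argue that the flag description is rich enough to encode joint sufficiency, not merely necessity. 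This rests on the constructive, iterative content of \refL{flag} rather than on the flag inclusions $F_i^\ga\subseteq F_{i+1}^\ga$ alone.
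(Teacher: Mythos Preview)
The paper's own proof is a single sentence: it declares the lemma immediate from \refD{sect} and the definition of the $F_i^\ga$. In effect the paper is recording the tautological forward inclusion --- a section has Laurent coefficients that, by construction of the $F_i^\ga$ as coordinate projections of the solution set of \refE{syst}, lie in those subspaces. Your forward direction reproduces exactly this and is fine.

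Your converse argument, however, has a genuine gap --- one you yourself flagged but did not close. The $F_i^\ga$ are the \emph{projections} of the solution set of \refE{syst} onto the individual coordinates; knowing $f_i^\ga\in F_i^\ga$ for each $i$ separately does not force the tuple $(f_{-k}^\ga,\ldots,f_{m-1}^\ga)$ to solve the coupled system. The inductive mechanism of \refL{flag} only proves the nesting $F_i^\ga\subseteq F_{i+1}^\ga$ (by specializing $f_{-k}=0$); it says nothing about compatibility of an arbitrary $f_{-k+\ell}\in F_{-k+\ell}^\ga$ with \emph{prescribed} earlier coefficients. A concrete failure already occurs for $\g=\sln(2)$, $V=\C^2$: at $\ga$ take
\[
\Psi=\begin{pmatrix} z^{-1}&1\\ 0&z\end{pmatrix},\qquad \Psi_{-1}=\begin{pmatrix}1&0\\0&0\end{pmatrix},\quad \Psi_0=\begin{pmatrix}0&1\\0&0\end{pmatrix}.
\]
Here $m=k=1$, and from \refE{syst} one finds $F_{-1}^\ga=\C e_2$ while $F_0^\ga=V$. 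Yet $f_{-1}=e_2$, $f_0=0$ satisfy the flag conditions and $\Psi f=e_1 z^{-1}+\cdots$ is not holomorphic. So the step ``its compatibility with the flag membership is the inductive step implicit in \refL{flag}'' is simply false for a general representative of the divisor. The paper's one-line proof should be read as the necessary direction only; in the applications that follow the divisor is taken in the reduced form $z^h$, where the system decouples weight-by-weight and the flag description does characterize sections.
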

The proof immediately follows from \refD{sect} and the definition of $F$. In the \refL{sect} we consider the flag to be semi-infinite to the right, where $F_i$ stabilize to $V$ since a certain moment.
\begin{definition}
  Given a matrix divisor $\Psi$ we call the Lie algebra of meromorphic $\g$-valued functions on $\Sigma$ leaving invariant $\Gamma_V(\Psi)$ by \emph{endomorphism algebra} of $\Psi$, and denote it by ${\End}(\Psi)$.
\end{definition}
Next we give a description of the Lie algebra $\End(\Psi)$ in terms of the flag configuration related to $\Psi$.

Let $\g=Lie(G)$. Given a flag $F$ consider the following filtration of~$\g$. Remind that $V$ is a $\g$-module. For every $i$ consider a subspace $\tilde\g_i\subseteq\g$ such that $\tilde\g_iF_j\subseteq F_{j+i}$ for every $j$.  Then $\tilde\g_i\subseteq\tilde\g_{i+1}$ because $\tilde\g_iF_j\subseteq F_{j+i}\subseteq F_{j+i+1}$.
\begin{lemma}\label{L:Endo}
$\End(\Psi)$ is the subspace of the space of all $\g$-valued merom\-orphic functions on $\Sigma$ satisfying the following requirement for every $\ga\in\Gamma$. Let $L$ be such a function, and $L(z)=\sum L_iz^i$ be its Laurent expansion at a $\ga\in\Gamma$. Then $L_i\in \tilde\g_i,\ \forall i$.
\end{lemma}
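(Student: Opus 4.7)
The plan is to reduce to a local Laurent-series calculation at each $\ga\in\Gamma$ and invoke the flag description of sections provided by \refL{sect}. Since $\End(\Psi)$ is the stabilizer of the sheaf $\Gamma_V(\Psi)$, the condition is local at every support point. Working at a fixed $\ga$ in a local coordinate, I would write $L(z)=\sum_i L_iz^i$ and a section $f(z)=\sum_j f_jz^j$, so that $(Lf)_n=\sum_{i+j=n}L_if_j$. By \refL{sect}, the statement ``$Lf$ is again a section'' translates to $(Lf)_n\in F_n^\ga$ for all $n$ and all $\ga\in\Gamma$.

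The ``if'' direction is then essentially a one-line verification: if $L_i\in\tilde\g_i$, i.e.\ $L_iF_j\subseteq F_{j+i}$ for every $j$, each summand $L_if_{n-i}$ lies in $F_n$, and hence so does the sum.

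For the converse I would feed $L$ carefully chosen local test germs in order to isolate the action of a single $L_i$ on a single $F_j^\ga$. The natural candidates are the monomials $f=vz^j$ with $v\in F_j^\ga$; by \refL{sect} these already belong to the stalk of $\Gamma_V(\Psi)$ at $\ga$, since all other Laurent coefficients vanish and $0$ lies in every flag piece. Applying $L$ gives $(Lf)_n=L_{n-j}v$, and the requirement $(Lf)_n\in F_n^\ga$ becomes $L_iv\in F_{j+i}^\ga$ after the substitution $i=n-j$. Ranging over $v\in F_j^\ga$ and over $j$ produces the desired inclusion $L_iF_j^\ga\subseteq F_{j+i}^\ga$, i.e.\ $L_i\in\tilde\g_i$. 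As a preliminary step I would also observe that any $L\in\End(\Psi)$ must be holomorphic off $\Gamma$, by testing against a locally constant section in any holomorphic trivialization of $\Psi$ away from the support.

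The main subtlety I expect is not the algebra but the justification that the monomial test germs $vz^j$ suffice to detect invariance of the entire sheaf. This is where \refL{sect} does the real work: because the flag description is term-by-term and linear, and $L$ acts linearly on Laurent expansions, invariance of $L$ on such monomials already forces the pointwise filtration condition on every $L_i$. With this reduction in hand, both directions follow from the Laurent bookkeeping above and the lemma is established.
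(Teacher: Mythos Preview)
Your argument is correct. The paper states \refL{Endo} without proof, so there is no author's argument to compare against; the route you outline---reducing to a local Laurent computation at each $\ga\in\Gamma$, invoking \refL{sect}, verifying the ``if'' direction termwise, and extracting the converse by testing on monomial germs $vz^j$ with $v\in F_j^\ga$---is exactly the natural way to fill in the omitted details.
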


It is instructive to keep in mind the homological interpretation of matrix divisors \cite{Tyur2d,TyurUMN}. In this approach a matrix divisor is defined as a 0-cochain with coefficients in the sheaf $G({\mathcal R})$ of rational $G$-valued functions whose boundary is a 1-cocycle with coefficients in the sheaf $G({\mathcal O})$ of regular $G$-valued functions. Thus an open covering $\{U_i\}$ of $\Sigma$ is assigned with the system of local rational $G$-valued functions $f_i$ such that its boundary $f_{ij}=f_if_j^{-1}$ is regular and regularly invertible on $U_i\cap U_j$, and $f_{ij}f_{jk}f_{ki}=1$ on $U_i\cap U_j\cap U_k$ for every triple $(i,j,k)$.  It is clear that the cocycle $f_{ij}$ (the system of gluing functions in other terminology) is invariant with respect to the right action of any global $G$-valued function.
\section{Canonical form of a matrix divisor. The moduli space}\label{S:canon}

Let $\frak o$ be a principal ideal ring (commutative, with the unit), $\frak k$ be its quotient field, $G$ be a rank $l$ Chevalley group over $\frak k$, $K=G_{\frak o}$ be the same group over $\frak o$, $H$ be a maximal torus in $G$, i.e. the subgroup generated by the 1-parameter subgroups $h'_i(t)$ where $h'_i(t)$ acts on every vector of a weight $\mu$ in $V$ as a multiplication by $t^{\mu(h_i)}$, $h_i\in\h$ ($i=1,\ldots,l$) form a base of the lattice $L_V^*$ dual to $L_V$, the last being generated by the weights of the module $V$ \cite[Lemma 35, p.58]{Stein}.

For an obvious reason $h_i'(t)$ is denoted also by $t^{h_i}$, and $H$ consists of the elements of the form $t_1^{h_1}\cdot\ldots\cdot t_l^{h_l}$ where
$t_i\in\frak k\setminus \{ 0\}$, $i=1,\ldots,l$. The following example shows how the module $V$ affects the torus of the corresponding Chevalley group.
\begin{example}
Consider $\g=\sln (2,\C)$.  Up to the end of the example let $h$ denote the canonical generator of the Cartan subalgebra. The standard $\g$-module has the weights $\pm 1$, and the adjoint module the weights $\{ \pm 2,0\}$. Hence the dual lattice to the weight lattice $L_V$ is generated by $h$ in the first case, and by $(1/2)h$ in the second case. The corresponding tori consist of elements of the form $t^{\rho(h)n}$ in the first case, and  $t^{\rho(h)n/2}$ in the second case where $n\in\Z$ and $\rho(h)$ denotes the corresponding representation operator (i.e. $\rho(h)=diag(1,-1)$ in the first case, and $\rho(h)=diag(2,0,-2)$ in the second case).
\end{example}

By definition of a Chevalley group it is assumed that the representation of $\g$ in $V$ is a faithful representation. For the reason that the root lattice is always a sublattice of $L_V$ we have $\a(h_i)\in\Z$ for every $\a\in R$, $i=1,\ldots,l$. Let $A^+$ denote the chamber in $H$ given by the tuples $\{ t_1,\ldots ,t_l\}$ satisfying the condition $\prod_{i=1}^l t_i^{\a(h_i)}\in\frak o$ for every positive root $\a$.
\begin{theorem}[\cite{Stein}, Theorem 21]\label{T:Cheval}
\begin{itemize}
\item[]
\item[(a)] $G=KA^+K$ (the Cartan decomposition);
\item[(b)] The $A^+$-component in (a) is defined uniquely modulo $H\cap K$.
\end{itemize}
\end{theorem}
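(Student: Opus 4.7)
The plan is to adapt the classical Smith normal form argument for matrices over a PID to the general Chevalley group setting, following Steinberg. The PID hypothesis on $\frak o$ will enter only through the existence of greatest common divisors, which drives all the reduction steps.

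For part (a), I would proceed in three stages. First, apply the Bruhat decomposition $G = \bigsqcup_{w \in W} BwB$ with $B = HU$ a fixed Borel subgroup, writing a given $g \in G$ as a product involving a torus element, unipotent factors and a Weyl group representative. Second, since the Weyl group representatives and the root subgroups $U_\a(\frak o)$ all lie in $K$, I would absorb the unipotent and Weyl factors into $K$ on either side via an inductive clearing procedure based on Chevalley's commutator formula, reducing $g$ modulo right and left multiplication by $K$ to a torus element $h \in H$. Third, use Weyl group representatives in $K$ to move $h$ into the dominant chamber $A^+$; this is possible because every $W$-orbit in $H$ meets the closure of $A^+$, the chamber being cut out precisely by the inequalities $\prod_i t_i^{\a(h_i)} \in \frak o$ for $\a$ positive.

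For part (b), suppose $g = k_1 a k_2 = k_1' a' k_2'$ with $a, a' \in A^+$. For each fundamental dominant weight $\varpi$, the corresponding irreducible representation $V_\varpi$ carries the $K$-stable $\frak o$-lattice $V_\varpi(\frak o)$ spanned by a weight basis, and the torus element $a = t_1^{h_1}\cdots t_l^{h_l}$ acts on the highest weight vector as multiplication by $\prod_i t_i^{\varpi(h_i)}$. I would use the fact that the $\frak o$-saturation of $g V_\varpi(\frak o)$ inside $V_\varpi(\frak k)$ is a $K$-bi-invariant of $g$, and for $g = k_1 a k_2$ recovers the scalar $\prod_i t_i^{\varpi(h_i)}$ up to a unit of $\frak o$. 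Equating this invariant for $k_1 a k_2$ and $k_1' a' k_2'$ across all fundamental $\varpi$ forces $\varpi(a)/\varpi(a') \in \frak o^\times$ for every fundamental $\varpi$; since the fundamental weights span $L_V$ dually to the basis $\{h_i\}$ of $L_V^*$, one obtains $a^{-1}a' \in H \cap K$.

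The main obstacle will be the clearing procedure in the first stage of (a): carrying out the Gaussian-elimination analog systematically within $G$ requires careful use of the Chevalley commutator relations among the $U_\a(\frak o)$, together with an induction on the height of positive roots and a choice of pivot elements in $\frak o$ of minimal valuation. The PID hypothesis is essential here, since each elementary reduction step amounts to solving a Bezout-type relation in $\frak o$, and the termination of the procedure depends on the divisibility structure of $\frak o$.
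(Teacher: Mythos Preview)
The paper does not give its own proof of this statement; it is quoted as Theorem~21 of Steinberg's \emph{Lectures on Chevalley groups} and used as a black box for the subsequent \refT{Cartan}. There is therefore no in-paper argument to compare your sketch against.

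On its own merits your outline is in the spirit of Steinberg's proof and is broadly sound, but one point in stage one of part~(a) should be tightened. After Bruhat over $\frak k$ the unipotent factors lie in $U_\a(\frak k)$, not in $U_\a(\frak o)$, so they cannot simply be ``absorbed into $K$''. The actual mechanism is a rank-one reduction: for each simple root $\a$ one works inside the $SL_2$-subgroup $\langle U_\a,U_{-\a}\rangle$ and performs a Smith/Euclid step there, which is precisely where the PID (in Steinberg, Euclidean) hypothesis on $\frak o$ is used. You do flag this as the main obstacle, so you see the issue, but the phrase ``absorb the unipotent and Weyl factors into $K$'' understates what has to be done. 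For part~(b), the $K$-bi-invariant you want is not the saturation of $g\cdot V_\varpi(\frak o)$ but rather its elementary divisors relative to the standard lattice $V_\varpi(\frak o)$; the extremal one among these recovers $\prod_i t_i^{\varpi(h_i)}$ up to a unit, exactly as you conclude.
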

We need a particular case of \refT{Cheval} when $\frak o=\C(z)$ is the ring of Tailor expansions in $z$, $\frak k=\C(z^{-1},z)$ is the field of the Laurent expansions. The elements $t_i$ up to units of the ring $\C(z)$ can be taken as $t_i=z^{d_i}$ where $d_i\in\Z$ ($i=1,\ldots,l$). Let $h=\sum_{i=1}^ld_ih_i$. Then $t_1^{h_1}\cdot\ldots\cdot t_l^{h_l}=z^h$ (once again, $z^h$ is defined on a module $V$ such that $h\in L_V^*$, and operates as multiplication by $z^{\mu(h)}$ on the subspace of weight $\mu$ in $V$). The condition  $\prod_{i=1}^l t_i^{\a(h_i)}\in\frak o$ means that $z^{\a(h)}$ is holomorphic, i.e. $\a(h)\ge 0$, for every positive $\a$.
We conclude that $H=\{ z^h|h\in L_V^* \}$ and $A^+=\{ z^h| \a(h)\ge 0,\ \forall \a\in R_+ \}$. In this case we obtain the following Cartan decomposition for current groups out of \refT{Cheval}. In certain particular cases it is also known as factorization theorem in the theory of holomorphic vector bundles.
\begin{theorem}\label{T:Cartan}
Let $G=G(\C(z^{-1},z))$ be the Chevalley group given by a simple Lie algebra $\g$ over $\C$ and its module $V$, $K=G(\C(z))$ be the same group over the ring of Taylor series, $H$ and $A^+$ be as introduced above. Then
\[  G=KA^+K,
\]
and the $A^+$-component in the decomposition is determined uniquely up to $H_\C$.
\end{theorem}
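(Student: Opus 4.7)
The proof is a direct specialization of Theorem \ref{T:Cheval}. First I would verify the hypotheses: $\frak o=\C(z)$ is a discrete valuation ring with maximal ideal $z\C(z)$, hence a principal ideal ring, and its quotient field is $\frak k=\C(z^{-1},z)$. Theorem \ref{T:Cheval}(a) then yields $G=KA^+K$ as an abstract Cartan decomposition, and the remaining content of Theorem \ref{T:Cartan} is to rewrite the abstract $A^+$ in the concrete form appearing in the statement and to identify the residual ambiguity with $H_\C$.

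For the explicit identification I would follow the computation already sketched in the paragraph preceding the theorem. Any nonzero element of $\C(z^{-1},z)$ factors uniquely as $uz^d$ with $u\in\C(z)^*$ and $d\in\Z$; therefore a general torus element takes the form $\prod_i(u_iz^{d_i})^{h_i}=\bigl(\prod_i u_i^{h_i}\bigr)\cdot z^h$, where $h=\sum_i d_i h_i\in L_V^*$. The unit factor $\prod_i u_i^{h_i}$ lies in $H\cap K$ and can be absorbed into either $K$-component of the decomposition, so one may take the $A^+$-piece to be of the pure form $z^h$. The chamber condition $\prod_i t_i^{\a(h_i)}\in\frak o$ then becomes $z^{\a(h)}\in\C(z)$, i.e.\ $\a(h)\ge 0$ for every $\a\in R_+$. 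This recovers exactly the descriptions $H=\{z^h\mid h\in L_V^*\}$ and $A^+=\{z^h\mid \a(h)\ge 0,\ \forall \a\in R_+\}$ used in the statement.

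For uniqueness, Theorem \ref{T:Cheval}(b) tells us that the $A^+$-component is well-defined modulo $H\cap K$. Working with the normalized representatives $z^h$, one checks that the remaining ambiguity is precisely $H_\C$: an element $z^h$ belongs to $K$ iff multiplication by $z^{\mu(h)}$ is a unit of $\C(z)$ on every weight-$\mu$ subspace of $V$, i.e.\ iff $\mu(h)=0$ for every weight $\mu$; since $V$ is faithful its weights span $\h^*$, forcing $h=0$. Hence the only genuine freedom in the $A^+$-factor is the constant torus $H_\C$.

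The main obstacle is really an absence of obstacle: once Steinberg's theorem (Theorem \ref{T:Cheval}) is granted, the proof of Theorem \ref{T:Cartan} is a careful bookkeeping of $H$, $A^+$, and of the ambiguity subgroup in the Taylor/Laurent setting. The only points that genuinely require attention are that the absorption of the unit factor $\prod_i u_i^{h_i}$ is compatible with the two-sided structure of the decomposition, and that the residual $H\cap K$-ambiguity collapses exactly to $H_\C$ and not to a larger subgroup.
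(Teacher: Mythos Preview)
Your approach matches the paper's exactly: the paper gives no separate proof of this theorem but presents it as a direct specialization of Theorem~\ref{T:Cheval} (Steinberg's Theorem~21), with the explicit identification of $H$ and $A^+$ carried out in the paragraph immediately preceding the statement. Your bookkeeping of the unit factor $\prod_i u_i^{h_i}$ and of the chamber condition $\a(h)\ge 0$ reproduces precisely that paragraph.
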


By virtue of \refT{Cartan} the support of a divisor can be characterized as the set of those points in $\Sigma$ for which $h\ne 0$.

Up to equivalence given by left multiplication by a distribution taking values in $K$ we can assume that $\Psi=z^hk(z)$, $k(z)\in K$ where $h$ and $k$ depend on the point of $\operatorname{supp}\Psi$. We call it the \emph{reduced form} of the divisor.

Given a $\g$-module $V$ of highest weight $\chi$, and an $h\in (L_V^*)_+$ we introduce the following flag $F$ in $V$ (below $m=\chi(h)$, $m\in\Z_+$). First we define the grading of the module $V$:
\begin{equation}\label{E:vgr}
    V=\bigoplus_{i=-m}^m V_i\quad \text{where}\quad V_i=\{ v\in V \ |\ hv=-iv \}.
\end{equation}
Obviously $V_i=\bigoplus\limits_{\mu(h)=-i} V_\mu$, $V_\mu$ being the weight subspace of $V$ of weight~$\mu$.

Next we define the flag $F\, :\, \{ 0\}\subseteq F_{-m}\subseteq\ldots\subseteq F_m=V$ by setting
\begin{equation}\label{E:hflag}
   F_j=\bigoplus_{s=-m}^j V_s.
\end{equation}
In particular, $F_{-m}$ is generated by the highest weight vector.
Let $Q=\Z(R)$ be the root lattice of $\g$.
\begin{lemma}
Let $h\in  L_V^*$. Then $F$ is nothing but the flag corresponding to the divisor $\Psi=z^h$ by virtue of \refL{flag}.
\end{lemma}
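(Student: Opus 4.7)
The plan is to exploit the fact that $\Psi=z^h$ acts diagonally in the weight decomposition of $V$, which forces the system \refE{syst} to decouple into weight-by-weight conditions on the Laurent coefficients of a section. Once that is observed, identifying the two flags becomes a direct calculation.

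First I would expand $\Psi=z^h$ in the weight basis. Since $h\in L_V^*$, each weight space $V_\mu$ is preserved and $z^h$ acts on $V_\mu$ by the scalar $z^{\mu(h)}$; by \refE{vgr}, $V_s$ collects the weights with $\mu(h)=-s$, so $z^h$ acts on $V_s$ as $z^{-s}$. Consequently the Laurent coefficient $\Psi_j$ in $\Psi=\sum_j \Psi_j z^j$ is the projector onto $V_{-j}$, and $j$ ranges over $[-m,m]$ with $m=\chi(h)$. In particular $\ord_\gamma \Psi^{-1}=-m$, so the integer $k$ appearing in \refL{flag} equals $m$.

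Next I would write a candidate section $f=\sum_j f_j z^j$ with $f_j=\sum_s f_j^{(s)}$, $f_j^{(s)}\in V_s$, and simply compute
\[
\Psi f \;=\; \sum_{j,\,s} z^{\,j-s}\, f_j^{(s)}.
\]
Holomorphicity of $\Psi f$ at $\gamma$ is equivalent to the decoupled conditions $f_j^{(s)}=0$ whenever $s>j$, i.e.\ to $f_j\in\bigoplus_{s\le j}V_s$, which is exactly the subspace $F_j$ defined in \refE{hflag}. To match this against \refL{flag}, whose $F_j$ is the space of $j$-th Laurent components realised by \emph{some} solution of \refE{syst}, I would observe that, because $\Psi_j=P_{V_{-j}}$, each equation of \refE{syst} splits into mutually independent pieces on the distinct subspaces $V_{m}, V_{m-1},\ldots$; there are therefore no cross-constraints linking different $j$, and one can freely realise any $f_j\in\bigoplus_{s\le j}V_s$. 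This yields the required equality of flags.

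The only real subtlety is bookkeeping two sign conventions at once: the grading index $s$ differs in sign from the weight value $\mu(h)$ via \refE{vgr}, and the relationship between $\ord_\gamma\Psi^{-1}$ and the pole order of $\Psi$ must be tracked so that $k$ and the admissible range of $j$ come out consistently. Once the identification $\Psi_j=P_{V_{-j}}$ is in place, the lemma reduces to a routine verification and no further input is needed.
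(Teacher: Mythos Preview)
Your proposal is correct and uses essentially the same idea as the paper: the diagonal action of $z^h$ on the weight (equivalently, $V_s$) decomposition. The only organizational difference is that the paper parametrizes sections by writing $f=z^{-h}s$ with $s$ holomorphic and reads off $f_p\in F_p$ from the resulting expansion, whereas you expand $\Psi f$ directly and argue decoupling; both amount to the same computation, and your version makes the surjectivity (every element of $F_j$ is realized) a bit more explicit than the paper does.
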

\begin{proof}
We resolve the equation \refE{sect} for $\Psi=z^h$:\ $f=z^{-h}s$ where $s(z)$ is holomorphic in the neighborhood of $z=0$.
Take $s(z)=\sum\limits_{j=0}^\infty s_jz^j$ where $s_j\in V$ for every $j\ge 0$.
Let $s_j=\sum\limits_{i=-m}^m s_j^{(i)}$ be an expansion of $s_j$ according to the grading of $V$, i.e. $s_j^{(i)}\in V_i$. Then
\[
  z^{-h}s_j=\sum\limits_{i=-m}^m s_j^{(i)}z^i,
\]
hence
\[
    f(z)= \sum_{j=0}^\infty\left( \sum_{i=-m}^m s_j^{(i)}z^i
                  \right)z^j
                = \sum_{p=-m}^\infty\left( \sum_{i=-m}^p s_{p-i}^{(i)}
                  \right)z^p.
\]
Since $i\le p$ in the internal sum, the last belongs to the subspace $F_p$.
\end{proof}
\begin{remark}
The flags of the form \refE{hflag} already occured in \cite{Falt} in the context of \emph{infinitesimal parabolic structures}. In contrast to any parabolic structure our flag distributions appear as an intrinsic structure for a matrix divisor and the corresponding holomorphic vector bundle.
\end{remark}
Let $D$ be a nonnegative divisor, $\Pi=\supp D$,  $\Pi\cap\Gamma=\varnothing$, and $\Gamma_{\rm gl}^D(\Psi)=\{ f\in \Gamma_V(\Psi)\ |\ (f)+D+m\Gamma \ge 0,\ f\ \text{is global} \}$.
\begin{corollary}\label{C:nontr}
$\dim \Gamma_{\rm gl}^D(\Psi)=\dim V (\deg D-g+1)$, in particular $\Gamma_{\rm gl}^D(\Psi)$ is trivial unless $\deg D\ge g$.
\end{corollary}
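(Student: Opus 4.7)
The plan is to identify $\Gamma_V(\Psi)$ with the sheaf of local holomorphic sections of a rank $n=\dim V$ holomorphic vector bundle $E_\Psi$ on $\Sigma$, so that $\Gamma_{\mathrm{gl}}^D(\Psi)=H^0(\Sigma,E_\Psi\otimes\O(D))$, and then to invoke Riemann--Roch. Local freeness is immediate from the reduced form of \refT{Cartan}: at each $\ga\in\Gamma$ write $\Psi=z^{h_\ga}k_\ga(z)$ with $k_\ga\in K$, so that, by \refL{sect}, a germ of section at $\ga$ has the form $f=k_\ga^{-1}z^{-h_\ga}s$ with $s$ a holomorphic $V$-valued germ; this exhibits the stalk of $\Gamma_V(\Psi)$ at $\ga$ as a free $\O_\ga$-module of rank $n$. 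Off $\Gamma$ the stalks are just $V\otimes\O$, and the distribution $\Psi$ itself supplies the gluing cocycle (as in the homological reformulation recalled at the end of \refS{mdiv}), producing a well-defined rank $n$ bundle $E_\Psi$.

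The key computation is that $\deg E_\Psi=0$. Because the local transition data at $\ga$ is $z^{h_\ga}$ up to holomorphic units, the contribution at $\ga$ to $\deg E_\Psi$ equals $-\tr_V h_\ga$. Semi-simplicity of $\g$ gives $\h\subset\g=[\g,\g]$, so $\tr_V h=0$ in every finite-dimensional $\g$-module; summing over $\ga\in\Gamma$ yields $\deg E_\Psi=0$.

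Riemann--Roch for $E_\Psi(D)$ then reads
\[
   \chi(E_\Psi(D))=\deg E_\Psi+n\deg D+n(1-g)=n(\deg D-g+1),
\]
and for $\deg D>2g-2$ standard Serre-duality vanishing forces $H^1(E_\Psi(D))=0$, giving the claimed equality. For the ``trivial unless $\deg D\ge g$'' part: once $\deg D\le g-1$ the above Euler characteristic is $\le 0$, and for a generic matrix divisor $\Psi$, so that $E_\Psi$ is semi-stable of degree~$0$, the slope inequality applied to $E_\Psi(D)$ precludes a nonzero global section, whence $\Gamma_{\mathrm{gl}}^D(\Psi)=0$. The main delicacy lies precisely in this vanishing in the intermediate range of $\deg D$: it rests on a genericity (semi-stability) hypothesis on $\Psi$; once that is in force, both parts of the statement follow at once from Riemann--Roch applied to the degree-$0$ bundle $E_\Psi$.
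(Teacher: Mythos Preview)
Your argument is correct and in fact slightly more careful than the paper's, but it proceeds along a different line. The paper does not pass through the bundle $E_\Psi$ at all: it embeds $\Gamma_{\rm gl}^D(\Psi)$ into the space $\F^{D+m\Gamma}$ of global $V$-valued meromorphic functions with $(f)+D+m\Gamma\ge 0$, applies Riemann--Roch to the \emph{trivial} rank-$n$ bundle twisted by $\O(D+m\Gamma)$ to get $\dim\F^{D+m\Gamma}=(\dim V)(\deg D+m|\Gamma|-g+1)$, and then subtracts the codimension imposed by the flag conditions $f_s^\ga\in F_s^\ga$. That codimension is $\sum_{\ga}\sum_{s=-m}^{m}\codim_V F_s^\ga$, and the paper evaluates it as $m|\Gamma|\dim V$ using what it calls ``symmetry of the grading'' \refE{vgr}. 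Unwinding that phrase, the computation is $\sum_{s}\codim_V F_s=\sum_j (j+m)\dim V_j=m\dim V-\tr_V h_\ga$, so the hidden input is exactly your observation that $\tr_V h_\ga=0$ by semisimplicity. In other words, the two arguments pivot on the same algebraic fact, packaged either as ``the flag codimensions telescope to $m\dim V$'' or as ``$\deg E_\Psi=0$''.

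What each approach buys: the paper's count is elementary (only scalar Riemann--Roch) but tacitly assumes both that $H^1(\O(D+m\Gamma))=0$ and that the flag conditions are independent on $\F^{D+m\Gamma}$; neither is argued. Your route makes the needed hypothesis explicit---$H^1(E_\Psi(D))=0$---and correctly flags that the vanishing, as well as the ``trivial unless $\deg D\ge g$'' clause, rests on a genericity/semistability assumption on $\Psi$. One small point: your claim that $H^1(E_\Psi(D))=0$ follows from $\deg D>2g-2$ ``by standard Serre duality'' already uses semistability of $E_\Psi$ (otherwise $E_\Psi^\vee\otimes K(-D)$ can have sections for any fixed $\deg D$); you invoke semistability two sentences later anyway, so this is only a matter of ordering. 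With that adjustment your proof is sound and, if anything, more transparent about where genericity enters than the paper's own argument.
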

\begin{proof}
Let $\F^{D+m\Gamma}$ denote the space of global sections $f$ satisfying the condition $(f)+D+m\Gamma \ge 0$, and $l_{D+m\Gamma}=\dim\F^{D+m\Gamma}$. By the Riemann--Roch theorem $l_{D+m\Gamma}=(\dim V)(\deg D+m|\Gamma |-g+1)$. However, the space of sections has a codimension in $\F^{D+m\Gamma}$ coming from the conditions $f_s\in\F_s^\ga$ where $F_s^{\ga}$, $s=-m,\ldots,m$ are the flag subspaces at~$\ga$. The contribution of every $\ga\in\Gamma$ to the codimension is equal to $\sum_{s=-m}^m \codim_VF_s^{\ga}$. By symmetry of the grading \refE{vgr} the last is equal to $m\dim V$. The total codimension is equal to  $(m\dim V)|\Gamma|$. Hence the rest of dimension is equal to $(\dim V)(\deg D-g+1)$, and it should be $\deg D-g+1>0$ for non-triviality of the space of sections.
\end{proof}
The highest weight $\chi$, and the tuple $h=\{ h_\ga\in L_V^*\ |\ \ga\in\Gamma\}$ are discrete invariants of a divisor. The matrix divisors also have moduli coming from the $K$-components of their canonical forms at the points in $\Gamma$, and from elements $\ga\in\Gamma$ themselves. Below we introduce two types of  equivalence of matrix divisors, and the corresponding moduli spaces.
\begin{definition}\label{D:equid}
Two matrix divisors are \emph{equivalent} if they have the same sheaf of sections (up to the common left shift by a constant (in $z$ and $\ga$) element of $G$).
\end{definition}
\begin{remark}
This equivalence is different from that given in \cite{Tyur65,Tyur66} for the purpose of classificaton of the holomorphic vector bundles. Following the last, two matrix divisors are equivalent if one of them can be taken to another by right multiplication by a (global) meromorphic $G$-valued function. In particular, the divisors equivalent in this sense may have different support.
\end{remark}
\begin{lemma}
Two matrix divisors are equivalent in sense of \refD{equid} if, and only if, they have the same flag configuration (up to the common left shift by a constant (in $z$ and $\ga$) element of $G$), in particular, the same $\Gamma$.
\end{lemma}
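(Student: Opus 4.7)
The plan is to reduce the lemma to a \emph{recovery principle}: the flag configuration of a matrix divisor is encoded in, and can be read off from, its sheaf of local sections. Since \refL{sect} already gives the reconstruction of $\Gamma_V(\Psi)$ from $\{F_\bullet^\gamma\}$, both directions of the biconditional will reduce, modulo this principle, to bookkeeping with the action of a constant $g\in G$ on $V$.

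The first step will be to establish the recovery principle in the form
\[
  F_i^\gamma \;=\; \bigl\{\,f_i \;\bigm|\; f=\textstyle\sum_j f_j z^j \text{ is a local section of } \Psi \text{ at } \gamma\,\bigr\},
\]
where $z$ is a local coordinate at $\gamma$. One inclusion is immediate from the construction in \refL{flag}: if $f$ is a local section then $\Psi f$ is holomorphic at $\gamma$, so its Laurent coefficients solve the homogeneous system \refE{syst}, and $F_i^\gamma$ is by definition the space of $i$-th components of such solutions. For the reverse inclusion, given $f_i\in F_i^\gamma$ I would extend it to a finite solution $(f_j)$ of \refE{syst} and append an arbitrary holomorphic tail, producing a bona fide local section with the prescribed $i$-th coefficient. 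One also needs the housekeeping that $F_i^\gamma=V$ for $i$ above the pole order of $\Psi$ and $F_i^\gamma=\{0\}$ for $i$ below the pole order of $\Psi^{-1}$; both follow from the local structure $\Psi=z^h k(z)$ provided by \refT{Cartan} together with the description of the flag given in \refE{hflag}.

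With the recovery principle in hand the lemma follows easily. For ($\Leftarrow$), if $F_\bullet^{\gamma,\Psi'}=g\cdot F_\bullet^{\gamma,\Psi}$ for a single constant $g\in G$ and every $\gamma$, then \refL{sect} gives $\Gamma_V(\Psi')=g\cdot\Gamma_V(\Psi)$ immediately. For ($\Rightarrow$), applying the recovery principle to both $\Psi$ and $\Psi'$ after taking $i$-th Laurent coefficients at $\gamma$ turns the assumed equality $\Gamma_V(\Psi')=g\cdot\Gamma_V(\Psi)$ into $F_i^{\gamma,\Psi'}=g F_i^{\gamma,\Psi}$ for every $\gamma$ and every $i$. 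The equality of supports then drops out for free: at $\gamma\notin\operatorname{supp}\Psi$ the divisor is holomorphically invertible, sections are simply holomorphic $V$-valued functions, and the flag degenerates to the trivial one ($F_i=0$ for $i<0$, $F_i=V$ for $i\ge 0$); this property is preserved by any invertible constant $g\in G$, so it transfers to $\Psi'$ at the same point, and by symmetry $\operatorname{supp}\Psi=\operatorname{supp}\Psi'$. The main delicate point, rather than a substantive obstacle, is the extension of \refL{flag} to the full range of Laurent indices noted above; once that is in place, the rest of the argument is essentially a direct transport through \refL{sect}.
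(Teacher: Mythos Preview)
Your argument is correct and is precisely the unpacking of the paper's one-line proof ``Immediately follows from \refL{sect}'': that lemma already encodes the bijection between the sheaf of local sections and the flag configuration, and your recovery principle is just the inverse direction of that bijection made explicit. The invocation of \refT{Cartan} and \refE{hflag} for the housekeeping step is more than necessary (the stabilization $F_i^\gamma=V$ for large $i$ and $F_i^\gamma=0$ for small $i$ follows directly from the pole orders of $\Psi$ and $\Psi^{-1}$ without any normal form), but it does no harm.
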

\begin{proof}
Immediately follows from \refL{sect}.
\end{proof}
We define a \emph{flag set} in the same way as a flag configuration except that we relax the requirement that $\Gamma$ is fixed. Then our second equivalence relation is as follows.
\begin{definition}\label{D:equid2}
Two matrix divisors are equivalent if they have the same flag set (up to the common left shift by a constant (in $z$ and $\ga$) element of $G$).
\end{definition}
Denote the moduli space of matrix divisors with given invariants $\chi$, $h=\{ h_\ga\in Q^*\cap L_V\ |\ \ga\in\Gamma \}$ by $\M^\chi_h$ (it corresponds to the equivalence given by \refD{equid2}), and with additionally fixed $\Gamma$ by $\M^{\chi,\Gamma}_h$ (it corresponds to the equivalence given by \refD{equid}). Denote the part of the moduli space over a point $\ga\in\Gamma$ by $\M^\chi_{h_\ga}$.

It is our next step to represent $\M^{\chi,\Gamma}_h$ as a homogeneous space and describe the stationary group of a point of this space.

Given a matrix divisor, consider the corresponding $h=\{ h_\ga\in L_V^* |\ \ga\in\Gamma \}$. Observe that for a faithful $\g$-module $V$ one has $L_V^*\subseteq Q^*$ \cite[Lemma 27]{Stein}. Hence $\a(h_\ga)\in\Z$ for every $\a\in R$, $\ga\in\Gamma$. For every $\ga$ the $h_\ga$ defines a grading $\g=\g_{-k}^\ga\oplus\ldots\oplus\g_k^\ga$, and the corresponding increasing filtration $\tilde\g_{-k}^\ga\subset\ldots\subset\tilde\g_k^\ga=\g$ where
\[
   \g_i^\ga=\bigoplus_{\a(h_\ga)=i}\g_\a,\quad \tilde\g_i^\ga=\bigoplus_{\a(h_\ga)\le i}\g_\a,\quad \tilde\g_i^\ga=\bigoplus_{s\le i}\g_s^\ga.
\]
In the next section we give equivalent definitions to these objects.

Let $\g_\ga=\{ L(z)=\sum\limits_{j=-k}^\infty L_jz^j \ |\ L_j\in\tilde\g_j^\ga,\ j=-k,\ldots,\infty \}$. Let $G_\ga\subset G(\C(z,z^{-1}))$ be the subgroup with the Lie algebra $\g_\ga$, $K_\ga=K\cap G_\ga$, $K_0=\prod\limits_{\ga\in\Gamma} K_\ga$.

\begin{proposition}\label{P:modsp}
$\M^{\chi,\Gamma}_h=\underbrace{K\times\ldots\times K}_{|\Gamma | \text{times}}/K_0$.
\end{proposition}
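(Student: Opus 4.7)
My plan is to realise the claimed homogeneous-space structure in two steps: first, use the Cartan decomposition of \refT{Cartan} to put every matrix divisor into a canonical reduced form parametrized by a tuple in $K^{|\Gamma|}$; second, compute the residual ambiguity in this description and identify it with the action of $K_0$.

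At each $\gamma\in\Gamma$ I apply \refT{Cartan} to the germ $\Psi_\gamma$, writing $\Psi_\gamma = u_\gamma\, z^{h_\gamma}\, k_\gamma$ with $u_\gamma,k_\gamma\in K$. Multiplying the distribution $\Psi$ on the left by the holomorphic distribution whose germ at $\gamma$ equals $u_\gamma^{-1}$ and whose germ at $x\notin\Gamma$ equals $\Psi_x^{-1}$, I obtain within each distribution class an \emph{reduced form} representative $\Psi_\gamma = z^{h_\gamma}k_\gamma$ for $\gamma\in\Gamma$ and $\Psi_x=1$ elsewhere. This yields a surjection
\[
\Phi\colon K^{|\Gamma|}\twoheadrightarrow\M^{\chi,\Gamma}_h,\qquad (k_\gamma)_\gamma\longmapsto[\Psi].
\]
To identify the fibres, suppose $(k_\gamma)$ and $(k'_\gamma)$ give distribution-equivalent divisors. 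Then at each $\gamma$ there is a holomorphic germ $c_\gamma\in K$ with $c_\gamma z^{h_\gamma}k_\gamma = z^{h_\gamma}k'_\gamma$; rearranging, $c_\gamma = z^{h_\gamma}v_\gamma z^{-h_\gamma}$ with $v_\gamma = k'_\gamma k_\gamma^{-1}\in K$. Since $\Ad(z^{h_\gamma})$ acts on a root-graded Laurent series by multiplying the $\g_\alpha$-component by $z^{\alpha(h_\gamma)}$, the requirement $c_\gamma\in K$ translates order by order into membership of $c_\gamma$ in $K\cap z^{h_\gamma}Kz^{-h_\gamma}$. Using the identification $G_\gamma = z^{h_\gamma}Kz^{-h_\gamma}$---which follows by a direct expansion matching the filtration $\tilde\g_j^\gamma$ defining $\g_\gamma$---this subgroup is precisely $K_\gamma$, and the induced action on the reduced tuples is that of $K_0=\prod_\gamma K_\gamma$ (acting on the $\gamma$-th factor through the conjugation $c\mapsto z^{-h_\gamma}c z^{h_\gamma}$). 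Hence the fibres of $\Phi$ are exactly the $K_0$-orbits in $K^{|\Gamma|}$.

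Finally, I verify that the finer equivalence of \refD{equid} (same sheaf of sections) adds no further identifications on the image of $\Phi$. By \refL{sect}, \refD{equid}-equivalence is equality of flag configurations; and the construction of $F^\gamma$ in \refL{flag} exhibits the flag at $\gamma$ attached to $z^{h_\gamma}k_\gamma$ as the image of the standard flag \refE{hflag} under a transformation depending on $k_\gamma$ only modulo $K_\gamma$, with the common constant global $G$-shift in \refD{equid} absorbed into the $K_0$-action after fixing a basepoint. Combined, these steps make $\Phi$ descend to a bijection $K^{|\Gamma|}/K_0\xrightarrow{\sim}\M^{\chi,\Gamma}_h$. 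I expect the main obstacle to be the order-by-order bookkeeping in the second step: matching the holomorphicity condition on $z^{h_\gamma}v_\gamma z^{-h_\gamma}$ with the filtration $L_j\in\tilde\g_j^\gamma$ requires carefully tracking how $\Ad(z^{h_\gamma})$ shifts Taylor coefficients across root components, and in particular pinning down the sign of the $h_\gamma$-grading so that the stabilizer coincides with the paper's $K_\gamma$ rather than its opposite.
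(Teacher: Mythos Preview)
Your argument is correct and follows essentially the same route as the paper. The paper's two-sentence proof invokes \refL{Endo} to identify $\prod_{\gamma\in\Gamma} G_\gamma$ as the stabilizer of the standard flag configuration and then intersects with $K^{|\Gamma|}$ to obtain $K_0$; your conjugation computation $c_\gamma=z^{h_\gamma}v_\gamma z^{-h_\gamma}\in K$ together with the identification $G_\gamma=z^{h_\gamma}Kz^{-h_\gamma}$ is precisely the content of that lemma specialized to the base divisor $\Psi_\gamma=z^{h_\gamma}$, carried out explicitly rather than cited.
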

\begin{proof}
By \refL{Endo} $\prod\limits_{\ga\in\Gamma}G_\ga$ is exactly the stationary subgroup of the given flag distribution in the group of all $G$-distributions. The proposition follows by $K_0=\underbrace{K\times\ldots\times K}_{|\Gamma | \text{times}}\cap \prod\limits_{\ga\in\Gamma}G_\ga$.
\end{proof}
\begin{theorem}\label{T:mspace}
The tangent space to $\M^{\chi,\Gamma}_h$ at the unit consists of elements of the form
\[
    \bigoplus_{\ga\in\Gamma}\sum_{\a\in R^+} a_\a^\ga(z)x_\a,\quad a_\a^\ga(z)\in \C(z)/z^{\a(h_\ga)}\C(z).
\]
\end{theorem}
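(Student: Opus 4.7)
The plan is to deduce the description directly from the homogeneous-space presentation of \refP{modsp}. Since $\M^{\chi,\Gamma}_h\cong K^{|\Gamma|}/K_0$ and $K_0=\prod_{\ga\in\Gamma}K_\ga$ sits factorwise inside the product, the tangent space at the identity coset is
\[
T_e\M^{\chi,\Gamma}_h\;\cong\;\mathrm{Lie}(K)^{|\Gamma|}/\mathrm{Lie}(K_0)\;\cong\;\bigoplus_{\ga\in\Gamma}\mathrm{Lie}(K)/\mathrm{Lie}(K_\ga).
\]
So the problem reduces to computing each one-point quotient $\mathrm{Lie}(K)/\mathrm{Lie}(K_\ga)$ and identifying it with $\sum_{\a\in R^+}\bigl(\C(z)/z^{\a(h_\ga)}\C(z)\bigr)x_\a$.

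Next I would identify the Lie algebras explicitly. The Lie algebra of $K=G(\C(z))$ is the space of $\g$-valued Taylor series $\sum_{j\ge 0}L_jz^j$. Using $K_\ga=K\cap G_\ga$ and the definition of $\g_\ga$ from the previous section, $\mathrm{Lie}(K_\ga)$ is precisely the Taylor subalgebra $\{\sum_{j\ge 0}L_jz^j \mid L_j\in\tilde\g_j^\ga\}$. Because the filtration $\tilde\g_\bullet^\ga$ is increasing, $\mathrm{Lie}(K_\ga)$ is in fact a $\C(z)$-submodule of $\mathrm{Lie}(K)$, so the quotient decomposes coefficient-by-coefficient in $z$:
\[
\mathrm{Lie}(K)/\mathrm{Lie}(K_\ga)\;\cong\;\bigoplus_{j\ge 0}\bigl(\g/\tilde\g_j^\ga\bigr)z^j.
\]

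The key computation is therefore the quotient $\g/\tilde\g_j^\ga$ for $j\ge 0$. Since $h_\ga$ lies in the positive chamber $A^+$, one has $\a(h_\ga)\ge 0$ for $\a\in R^+$ and $\a(h_\ga)\le 0$ for $\a\in R^-$. Hence $\tilde\g_j^\ga=\bigoplus_{\a(h_\ga)\le j}\g_\a$ automatically contains the Cartan subalgebra and every negative root space, and misses precisely those positive root spaces $\g_\a$ with $\a(h_\ga)>j$. Taking $\bigoplus_{\a\in R^+,\,\a(h_\ga)>j}\C x_\a$ as canonical complement identifies $\g/\tilde\g_j^\ga$ with this direct sum. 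Summing over $j\ge 0$, each positive root vector $x_\a$ contributes at the power $z^j$ precisely when $0\le j<\a(h_\ga)$, so its coefficient is a polynomial $a_\a^\ga(z)\in\C(z)/z^{\a(h_\ga)}\C(z)$. Reassembling over $\ga\in\Gamma$ yields exactly the claimed form.

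I do not expect a genuine obstacle: the whole argument is a bookkeeping exercise once the filtration $\tilde\g_\bullet^\ga$ is coupled with the positive-chamber condition on $h_\ga$. The one place where care is needed is verifying that for every $j\ge 0$ the positive-root direct sum $\bigoplus_{\a\in R^+,\,\a(h_\ga)>j}\g_\a$ is indeed a vector-space complement to $\tilde\g_j^\ga$ in $\g$; this reduces to the separation of signs of $\a(h_\ga)$ between $R^+$ and $R^-$, which is immediate from $h_\ga\in A^+$.
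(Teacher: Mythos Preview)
Your argument is correct and follows essentially the same route as the paper: reduce to one point via \refP{modsp}, identify $\mathrm{Lie}(K_\ga)$ inside the Taylor algebra using the filtration $\tilde\g_\bullet^\ga$, and use $\a(h_\ga)\ge 0$ for $\a\in R^+$ (resp.\ $\le 0$ for $\a\in R^-$) to see that only the positive-root coefficients of degree $<\a(h_\ga)$ survive in the quotient. The only cosmetic slip is writing ``$h_\ga\in A^+$''; in the paper $A^+$ denotes the chamber in the torus $H$, so the condition on the Lie-algebra element is $h_\ga\in (L_V^*)_+$ (equivalently $\a(h_\ga)\ge 0$ for all $\a\in R^+$), but this is exactly what you use.
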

\begin{proof}
Since $\M^\chi_{h_\ga}=K/K_\ga$, we have $T_e\M^\chi_{h_\ga}={\frak k}(z)/ {\frak k}(z)\cap\g_\ga$ where $\frak k$ is the Lie algebra of the group $K$ considered over $\C$.

Observe that $\g_\ga$ can be characterized as the subalgebra in $\g(z,z^{-1})$ consisting of elements of the form
\[
    L(z)=\sum_{\a\in R,\ i\ge\a(h_\ga)} x_\a z^i.
\]
Indeed, $L_i=\sum\limits_{s\le i} L_i^s$ where $L_i^s\in\g_s$, and $\g_s=\bigoplus\limits_{\a(h_\ga)=s} \g_\a$. Hence the terms $x_\a z^i$, $i\ge\a(h_\ga)$ are absent in the quotient  ${\frak k}/ {\frak k}(z)\cap\g_\ga$. In particular, the whole lower triangle subalgebra is filtered out because it is generated by $x_\a$, $\a\in R_-$, hence only $x_\a z^i$ with $i<0$ could be in the remainder ($i<\a(h_\ga)\le 0$), but there is no negative degrees in ${\frak k}(z)$ (moreover, the whole lower parabolic subalgebra $\tilde\g_0^\ga$ is filtered out for the same reason). Thus we are left with only upper nilpotents, and their exponents exactly give elements in the form claimed in the statement of the theorem.
\end{proof}
\begin{corollary}\label{C:dimm}
\begin{itemize}
\item[$1^\circ$.]
$\dim\M^\chi_{h_\ga}=\sum\limits_{s=1}^k s\dim\g_s^\ga$.
\item[$2^\circ$.]
$\dim\M^{\chi,\Gamma}_h=\sum\limits_{\ga\in\Gamma}\sum\limits_{s=1}^k s\dim\g_s^\ga$ ($\Gamma$ is fixed).
\item[$3^\circ$.]
$\dim\M^\chi_h=\sum\limits_{\ga\in\Gamma}(1+\sum\limits_{s=1}^k s\dim\g_s^\ga)$ ($\Gamma$ is not fixed).
\end{itemize}
where $k$ is the depth of the above defined grading.
\end{corollary}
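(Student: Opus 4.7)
The plan is to extract all three formulas directly from \refT{mspace} with a small amount of bookkeeping on root spaces. First, using \refT{mspace} restricted to the local factor (equivalently $\M^\chi_{h_\ga}\cong K/K_\ga$ via \refP{modsp}), I would identify
\[
T_e\M^\chi_{h_\ga}\cong\bigoplus_{\a\in R^+}\bigl(\C(z)/z^{\a(h_\ga)}\C(z)\bigr)\cdot x_\a.
\]
Since $h_\ga\in A^+$, every positive root satisfies $\a(h_\ga)\ge 0$, so each summand has dimension exactly $\a(h_\ga)$ over $\C$. Summing gives
\[
\dim\M^\chi_{h_\ga}=\sum_{\a\in R^+}\a(h_\ga).
\]

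Next I would rewrite this sum using the grading $\g_s^\ga=\bigoplus_{\a(h_\ga)=s}\g_\a$. The key observation is that for $s\ge 1$ the condition $\a(h_\ga)=s>0$ forces $\a\in R^+$, because the $A^+$-hypothesis makes $\a(h_\ga)\le 0$ for every $\a\in R^-$. Hence $\dim\g_s^\ga=\#\{\a\in R^+:\a(h_\ga)=s\}$ for each $s\ge 1$. Regrouping the previous sum by the value $s=\a(h_\ga)$ (the $s=0$ terms drop out) yields
\[
\sum_{\a\in R^+}\a(h_\ga)=\sum_{s=1}^k s\cdot\#\{\a\in R^+:\a(h_\ga)=s\}=\sum_{s=1}^k s\dim\g_s^\ga,
\]
which proves $(1^\circ)$.

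Finally, $(2^\circ)$ is immediate by summing over $\ga\in\Gamma$, since \refT{mspace} presents the tangent space of $\M^{\chi,\Gamma}_h$ as a direct sum of the local pieces computed above. For $(3^\circ)$, when $\Gamma$ is no longer fixed each of its $|\Gamma|$ points carries one additional complex parameter, its position on $\Sigma$, which adds $|\Gamma|$ to the total and gives the stated formula. The only mildly delicate step is verifying that the $A^+$-condition really does exclude negative roots from the strictly positive graded pieces; everything else is routine counting and direct summation.
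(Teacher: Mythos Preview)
Your proposal is correct and follows essentially the same approach as the paper: compute $\dim\M^\chi_{h_\ga}=\sum_{\a\in R^+}\a(h_\ga)$ from \refT{mspace}, regroup by the value $s=\a(h_\ga)$ using $\dim\g_\a=1$, sum over $\ga$ for $(2^\circ)$, and add one parameter per point of $\Gamma$ for $(3^\circ)$. Your explicit check that only positive roots occur in the graded pieces with $s\ge 1$ is a small clarification the paper leaves implicit.
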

\begin{proof}
Indeed, for every $\a\in R^+$ the number of moduli is equal to $\a(h_\ga)$, hence $\dim\M^\chi_{h_\ga}=\sum\limits_{\a\in R^+} \a(h_\ga)$. Next, $\g_s^\ga=\bigoplus\limits_{\a(h_\ga)=s}\g_\a$, i.e. $\dim\g_s^\ga=\sharp\{\a\ |\ \a(h_\ga)=s \}$. Since $\dim\g_\a=1$ we obtain the first and second claims. $3^\circ$ follows from $2^\circ$ taking account of elements $\ga\in\Gamma$ which are also counted as moduli.
\end{proof}
{\bf Examples.}

1) For $G=GL(n)$, $V=\C^n$, $h_\ga=diag(d_1,\ldots,d_n)$ \refT{mspace} gives $a_{ij}\in k(z)/z^{d_i-d_j}k(z)$ for the entry $a_{ij}$ of an element of the tangent space. These are the conditions claimed in \cite{Tyur65,Tyur66} for the canonical form of any divisor. Here they have a different meaning.

Assume $h_\ga=diag(1,0,\ldots,0)$ for every $\ga$. Then the corresponding grading is as follows: $\g=\g_{-1}\oplus\g_0\oplus\g_1$ where $\dim\g_{\pm 1}=n-1$ \cite{Sh_UMN_2015}, i.e. according to \refC{dimm}.$3^\circ$ every point contributes $(n-1)+1=n$ parameters into the dimension of the moduli space. If $|\Gamma |=ng$ then the total number of parameters is equal to $n^2g$. Observe that $\Ad G$ is a subgroup of the  group of equivalencies of any matrix divisor because the right multiplication by $g^{-1}$ ($g\in G$) is a multiplication by a global (constant) function while the left multiplication by $g$ is a multiplication by a $K$-valued distribution. Taking quotient by $\Ad G$ kills $n^2-1$ parameters, and we conclude that $\dim\M=n^2(g-1)+1$ which coincides with the known dimension of the moduli space of holomorphic rank $n$ vector bundles on $\Sigma$.

2) Consider $G=SO(2n)$, $V=\C^{2n}$,  $h_\ga=diag(1,0,\ldots,0)$ for every~$\ga$. Then again $\g=\g_{-1}\oplus\g_0\oplus\g_1$ where $\dim\g_{\pm 1}=2n-2$ \cite{Sh_UMN_2015}. Hence every $\ga\in\Gamma$ contributes $2n-1$ into dimension of the moduli space. If $|\Gamma |=ng$ then $\dim\M=(2n-1)ng-\dim G=(2n-1)n(g-1)$ (the subtracting of $\dim G=(2n-1)n$ is due to the action of $\Ad G$ as above).

3) Let $G=Sp(2n)$, $V=\C^{2n}$,  $h_\ga=diag(1,0,\ldots,0)$ for every~$\ga$. Then $\g=\g_{-2}\oplus\g_{-1}\oplus\g_0\oplus\g_1\oplus\g_2$ where $\dim\g_{\pm 1}=2n-2$, $\dim\g_{\pm 2}=1$ \cite{Sh_UMN_2015}. By \refC{dimm}.$2^\circ$ every point $\ga\in\Gamma$ contributes $2\cdot 1+(2n-2)+1=2n+1$ into the dimension of the moduli space. If $|\Gamma |=ng$ then $\dim\M=(2n+1)ng-\dim G=(2n+1)n(g-1)$ (again the subtracting of $\dim G$ is due to the action of $\Ad G$ as above but this time $\dim G=(2n+1)n$).

In all considered cases the dimension of the space of matrix divisors coincides with the dimension of the corresponding moduli space of semi-stable $G$-bundles. To obtain the same result for $SL(n)$ and $G=SO(2n+1)$ we would need consider the matrix divisors with values in the conformal extensions of the corresponding groups (see the Introduction).
\section{Moduli of matrix divisors and Lax operator algebras}\label{S:globa}

Here we will establish an isomorphism of the tangent space at the unit to the moduli space of matrix divisors (with given discrete invariants) with the quotient of the space of $M$-operators by the Lax operator algebra (basically defined by the same invariants). The result goes back to \cite{Klax}. For brevity, we consider only Chevalley groups with a trivial center here. We start with the definition of a Lax operator algebra.

Let $\g$ be a semi-simple Lie algebra over $\C$, $\h$ be its Cartan subalgebra, and $h\in\h$ be such element that  $p_i=\a_i(h)\in\Z_+$ for every simple root $\a_i$ of $\g$. If we denote the root lattice of $\g$ by $Q$ then $h$ belongs to the positive chamber of the dual lattice $Q^*$.

Let $\g_p=\{ X\in\g\ |\ (\ad h)X=pX \}$, and $k=\max\{p\ |\ \g_p\ne 0\}$. Then the decomposition $\g=\bigoplus\limits_{i=-k}^{k}\g_p$ gives a $\Z$-grading on $\g$. For the theory and classification results on such kind of gradings we refer to \cite{Vin}. Call $k$ a \emph{depth} of the grading. Obviously, $\g_p=\bigoplus\limits_{\substack{\a\in R\\ \a(h)=p}}\g_\a$ where $R$ is the root system of $\g$. Define also the following filtration on $\g$:  $\tilde\g_p=\bigoplus\limits_{q=-k}^p\g_q$. Then $\tilde\g_p\subset\tilde\g_{p+1}$ ($p\ge -k$), $\tilde\g_{-k}=\g_{-k},\ldots,\tilde\g_k=\g$, $\tilde\g_p=\g$, $p>k$.

As above, let $\Sigma$ be a complex compact Riemann surface with two given non-intersecting finite sets of marked points: $\Pi$ and $\Gamma$. Assume every $\ga\in\Gamma$ to be assigned with an $h_\ga\in Q^*_+$, and the corresponding grading and filtration. We equip the notation $\g_p$, $\tilde\g_p$ with the upper $\ga$ indicating that the grading (resp. filtration) subspace corresponds to $\ga$. We stress that $\g_p^\ga$, $\tilde\g_p^\ga$ are the same as defined in the previous section. Let $L$ be a meromorphic mapping $\Sigma\to\g$ holomorphic outside the marked points which may have poles of an arbitrary order at the points in $\Pi$, and has the decomposition of the following form at every~$\ga\in\Gamma$:
\begin{equation}\label{E:ga_expan}
   L(z)=\sum\limits_{p=-k}^\infty L_pz^p,\ L_p\in\tilde\g_p^\ga
\end{equation}
where $z$ is a local coordinate in the neighborhood of $\ga$. For simplicity, we assume that the depth of grading $k$ is the same all over $\Gamma$, though it would be no difference otherwise.

We denote by $\L$ a linear space of all such mappings. Since the relation \refE{ga_expan} is preserved under commutator, $\L$ is a Lie algebra called {\it Lax operator algebra}.

Lax operator algebras have emerged in \cite{KSlax} due to the observation by I.Krichever \cite{Klax} that the Lax operators of integrable systems with the spectral parameter on a Riemann surface have a very special Laurent expansions related to the Tyurin parameters of holomorphic vector bundles. In  \cite{Sh_DAN_LOA&gr,Sh_TrGr} they were generalized to the form described here. For the current state of the theory of Lax operator algebras and their applications to integrable systems we refer to \cite{Sh_UMN_2015,Sh_DGr,Sh_DAN_LOA&gr,Sh_TrGr,Sh_TMPh_14,Sh_TrMIAN_15,Sh_MMJ} and references therein.

To give the Lax operator algebra description of the moduli space of matrix divisors introduce the space of $M$-operators, the counterparts of Lax operators in the Lax pairs of integrable systems.

A meromorphic mapping $M:\ \Sigma\to\g$,  holomorphic outside $\Pi$ and $\Gamma$, is called an \emph{$M$-operator} if at any $\ga\in\Gamma$ it has a Laurent expansion
\begin{equation}\label{E:M_oper}
  M(z)=\frac{\nu_\ga h_\ga}{z}+\sum_{i=-k}^\infty M_iz^i
\end{equation}
where $M_i\in\tilde\g_i^\ga$ for $i<0$, $M_i\in\g$ for $i\ge 0$, and $\nu_\ga\in\C$. We denote by $\M_{\Pi,\Gamma,h}$ the space of $M$-operators corresponding to given data $\Pi,\Gamma,h$ (as above, $h=\{ h_\ga\ |\ \ga\in\Gamma\}$).  Obviously, $\L_{\Pi,\Gamma,h}\subset\M_{\Pi,\Gamma,h}$.

Let $(L)_\Pi$ and $(M)_\Pi$ be the restrictions of the corresponding divisors to $\Pi$. For an non-negative divisor $D$ introduce
\[
\begin{aligned}
  \L_{\Pi,\Gamma,h}^D &=\{ L\in\L_{\Pi,\Gamma,h}   \ |\  (L)_\Pi+D\ge 0 \}, \\
  \M_{\Pi,\Gamma,h}^D &=\{ M\in\M_{\Pi,\Gamma,h}   \ |\  (M)_\Pi+D\ge 0 \}.
\end{aligned}
\]

For a Lax operator algebra $\L_{\Pi,\Gamma,h}$, the mapping $\L_{\Pi,\Gamma,h}\to\bigoplus\limits_{\ga\in\Gamma}\g_\ga$ taking any $L\in\L_{\Pi,\Gamma,h}$ to the set of its Laurent expansions at the points in $\Gamma$ is called \emph{localization}.
\begin{theorem}\label{T:quot}
For the tangent space at the unit to the moduli space of matrix divisors we have
\begin{equation}\label{E:tagsp}
 T_e\M^{\chi,\Gamma}_h\cong \M_{\Pi,\Gamma,h}^D/\L_{\Pi,\Gamma,h}^D,
\end{equation}
independently of $\Pi$, $\chi$ and $D$, provided $\deg D<|\Gamma |$. The isomorphism is given by the localization map.
\end{theorem}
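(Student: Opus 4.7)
The plan is to establish that the localization map $\ell\colon\M^D_{\Pi,\Gamma,h}\to T_e\M^{\chi,\Gamma}_h$, sending an $M$-operator to the collection of its Laurent principal parts at the points of $\Gamma$ modulo those realizable by $L$-operators, descends to an isomorphism on $\M^D_{\Pi,\Gamma,h}/\L^D_{\Pi,\Gamma,h}$. Comparing the local expansion \refE{M_oper} of an $M$-operator with the local form \refE{ga_expan} of an $L$-operator, the resulting local quotient at each $\ga$ is concentrated in degrees $-1\le i<k$; for $i\ge 0$ the identification $\g/\tilde\g_i^\ga=\bigoplus_{\a\in R^+,\ \a(h_\ga)>i}\g_\a$ used in the proof of \refT{mspace} recombines these quotients into the local summand $\sum_{\a\in R^+}a_\a^\ga(z)x_\a$ with $a_\a^\ga\in\C(z)/z^{\a(h_\ga)}\C(z)$ appearing there, so that $\ell$ does take values in $T_e\M^{\chi,\Gamma}_h$.

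The kernel computation is then immediate: $M\in\M^D_{\Pi,\Gamma,h}$ has vanishing localization iff at every $\ga$ the Cartan term $\nu_\ga h_\ga/z$ vanishes and $M_i\in\tilde\g_i^\ga$ for $i\ge 0$, which combined with the conditions $M_i\in\tilde\g_i^\ga$ for $i<0$ already built into $\M^D$ is precisely the defining condition \refE{ga_expan} for an element of $\L^D_{\Pi,\Gamma,h}$. Thus $\ker\ell=\L^D_{\Pi,\Gamma,h}$ and $\ell$ descends to an injection $\M^D/\L^D\hookrightarrow T_e\M^{\chi,\Gamma}_h$. Independence of the quotient from $\chi$ is automatic, as $\chi$ enters neither source nor target.

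The main obstacle is surjectivity, and this is also where the hypothesis $\deg D<|\Gamma|$ enters; independence from $\Pi$ and $D$ will fall out of the same analysis. Given any admissible principal-part datum at $\Gamma$, one must produce a global $\g$-valued meromorphic function on $\Sigma$, holomorphic outside $\Pi\cup\Gamma$, that realises those principal parts with poles on $\Pi$ bounded by $D$. This is a Mittag--Leffler problem for the sheaf $\g\otimes_\C\O_\Sigma(D)$: from the short exact sequence
\begin{equation*}
  0\longrightarrow\g\otimes\O_\Sigma(D)\longrightarrow\g\otimes\O_\Sigma(D+k\Gamma)\longrightarrow\mathcal{S}\longrightarrow 0,
\end{equation*}
with $\mathcal{S}$ the skyscraper of admissible Laurent tails at $\Gamma$, the obstruction lives in $H^1(\Sigma,\g\otimes\O_\Sigma(D))$. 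A Riemann--Roch/Serre-duality count then shows that under $\deg D<|\Gamma|$ the connecting map vanishes on the $T_e\M^{\chi,\Gamma}_h$-subspace of $H^0(\mathcal{S})$, which produces the desired lift. Applying the same cohomological stability to enlargements of $D$ or $\Pi$ (which enlarge $\M^D$ and $\L^D$ compatibly) yields the advertised independence, finishing the proof.
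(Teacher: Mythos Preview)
Your kernel computation is correct and, as you note, does not invoke the hypothesis $\deg D<|\Gamma|$: an $M\in\M^D_{\Pi,\Gamma,h}$ whose Laurent jet at every $\ga$ already satisfies the $L$-constraints is, by definition, an element of $\L^D_{\Pi,\Gamma,h}$. So you obtain an injection $\M^D/\L^D\hookrightarrow T_e\M^{\chi,\Gamma}_h$ for free.

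The genuine gap is in your surjectivity step, and it is tied to a misreading of the hypothesis. You claim that ``a Riemann--Roch/Serre-duality count then shows that under $\deg D<|\Gamma|$ the connecting map vanishes on the $T_e\M^{\chi,\Gamma}_h$-subspace of $H^0(\mathcal S)$'', but no such count is carried out, and the inequality points the wrong way for that purpose. The obstruction to your Mittag--Leffler problem lives in $H^1(\Sigma,\g\otimes\O_\Sigma(D))$, which by Serre duality is dual to $H^0(\g\otimes\Omega_\Sigma(-D))$; making $\deg D$ \emph{small} makes this space \emph{larger}, not smaller. A concrete instance: take $g=1$, $|\Gamma|=1$, $D=0$. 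Then every element of $\M^D$ has its only pole at the single point of $\Gamma$, of order at most $k$; for $k=1$ this forces $\M^D=\L^D=\g$ (constants), so the quotient is zero while $T_e\M^{\chi,\Gamma}_h=\g_1^\ga\ne 0$. Thus the bare hypothesis $\deg D<|\Gamma|$ cannot by itself force surjectivity in the way you suggest.

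In the paper the hypothesis is used for a different, and opposite, purpose: it guarantees that the localization map itself is \emph{injective} on $\M^D$ (an element in the kernel would vanish at all of $\Gamma$ while having at most $\deg D<|\Gamma|$ poles, hence be identically zero). The paper then identifies $\M^D/\L^D$ with the quotient of the local images and computes that quotient root-by-root, matching it with \refT{mspace}. Your argument recovers the injection on the quotient level by a cleaner route, but your attempt to manufacture surjectivity out of $\deg D<|\Gamma|$ via cohomology does not go through; you should instead use the hypothesis exactly as the paper does, for injectivity of localization, and obtain the identification with the local quotient from there.
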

\begin{proof}
For $\deg D< |\Gamma|$ the localization map is injective for the reason that an element in the kernel would have $|\Gamma|$ zeroes and no more than $\deg D<|\Gamma|$ poles, hence it is trivial. At any point $\ga\in\Gamma$ the main parts of the Laurent expansions for $L$ and $M$ operators satisfy the same conditions, hence vanish in the quotient $\M_{\Pi,\Gamma,h}/\L_{\Pi,\Gamma,h}$. As for the Tailor parts, for $i=0,\ldots,k-1$ the coefficients at $z^i$ in the quotient run over $\g/{\tilde\g}_i=\g_{i+1}\oplus\ldots\oplus\g_k$ (and vanish for $i\ge k$). Hence $\M_{\Pi,\Gamma,h}^D/\L_{\Pi,\Gamma,h}^D=\bigoplus_{\ga\in\Gamma} \left\{\sum_{i=1}^kL_iz^i\, |\, L_i\in\bigoplus_{i<s}\g_s^\ga\right\}$ where it is exactly the quotient of localizations on the right hand side of the relation. It is similar to the proof of \refC{dimm} that
$\bigoplus_{i<s}\g_s^\ga=\bigoplus\limits_{i<\a(h_\ga)}\g_\a$. Hence $\M_{\Pi,\Gamma,h}^D/\L_{\Pi,\Gamma,h}^D=\bigoplus_{\ga\in\Gamma} \left\{\sum_{i=1}^kL_iz^i\, |\, L_i\in\bigoplus_{i<\a(h_\ga)}\g_\a \right\}$. The last space coincides with $T_e\M^{\chi,\Gamma}_h$ by \refT{mspace}.
\end{proof}
\begin{remark}
The independence of $\chi$ in the theorem is related to the triviality of the centers of the Chevalley groups in question (see remarks in the beinning of the section, and in Introduction).
\end{remark}
\begin{remark}
In general, the right hand side of \refE{tagsp} depends on $D$.  \refT{quot} may be reformulated using the space $\M^{\chi,\Gamma,D}_h$ corresponding to modified equivalence of matrix divisors. Namely, two matrix divisors will be equivalent if their spaces $\Gamma_{\rm gl}^D(\Psi)$ (cf. \refC{nontr}) are non-trivial and equal up to the shift by a constant element of the group.

We note the similarity with \cite[Lemma 5]{Tyur65} which reduces the classification problem for degree $0$ framed bundles to the corresponding problem for effective framed bundles by tensoring the first with two linear bundles of degrees $g-1$ and $1$, respectively. It is the same as to assume the existence of an external (with respect to $\Gamma$) degree $g$ divisor $D$. To stress the similarity we note that for a semisimple group $G$ all $G$-bundles have degree $0$, and that by \refC{nontr} $\Gamma_{\rm gl}^D(\Psi)$ is non-trivial if $\deg D\ge g$.
\end{remark}
\bibliographystyle{amsalpha}


\end{document}